\newtheorem{Satz}{Theorem}
\newtheorem*{Satz*}{Theorem A}
\newtheorem{Lemma}{Lemma}
\theoremstyle{definition}\newtheorem{Def}{Definition}
\theoremstyle{definition}\newtheorem{example}{Example}
\theoremstyle{remark}\newtheorem{Remark}{Remark}
\newcommand{\RR}{\mathbb R}
\newcommand{\NN}{\mathbb N}
\newcommand\numberthis{\addtocounter{equation}{1}\tag{\theequation}}
\newcommand{\defref}[1]{Definition \ref{#1}}
\newcommand{\thref}[1]{Theorem \ref{#1}}
\newcommand{\BIGOP}[1]
{
\mathop{\mathchoice%
{\raise-0.22em\hbox{\huge $#1$}}%
{\raise-0.05em\hbox{\Large $#1$}}{\hbox{\large $#1$}}{#1}}}
\newcommand{\bigtimes}{\BIGOP{\times}}
\newcommand{\BIGboxplus}{\mathop{\mathchoice%
{\raise-0.35em\hbox{\huge $\boxplus$}}%
{\raise-0.15em\hbox{\Large $\boxplus$}}{\hbox{\large $\boxplus$}}{\boxplus}}}
\begin{document}


\title{The Sequential Empirical Process of Nonlinear Long-Range Dependent Random Vectors}
\author{Jannis Buchsteiner\thanks{{E-mail: \texttt{jannis.buchsteiner@rub.de}\newline Research supported by Collaborative Research Center SFB 823 {\em
Statistical modeling of nonlinear dynamic processes.}}}\\ \normalsize{\textit{Fakultät für Mathematik, Ruhr-Universität Bochum, Germany.}}}
\date{}
\maketitle

\begin{abstract}
Let $(G(X_j))_{j\geq1}$ be a multivariate subordinated Gaussian process, which exhibits long-range dependence. We study the asymptotic behaviour of the corresponding sequential empirical process under two different types of subordination. The limiting process is either a product of a deterministic function and a Hermite process as in the one-dimensional case or a sum of various processes of this kind.\\[1ex]
\noindent {\sf\textbf{Keywords:}} Multivariate long-range dependence, sequential empirical process, subordinated Gaussian process
\end{abstract}

\section{Introduction}
For a Gaussian process $(X_j)_{j\geq1}$ and a measurable function $G$ the sequential empirical process $(R_N(x,t))$ corresponding to the subordinated process $(G(X_j))_{j\geq1}$ is given by
\begin{equation}\label{seq}
 R_N(x,t):=\sum_{j=1}^{\lfloor Nt\rfloor}\left(1_{\{G(X_j)\leq x\}}-P(G(X_j)\leq x)\right),~~~x\in\RR,t\in[0,1].
\end{equation}
This process plays an important role in nonparametric statictics, for example in change-point analysis. If the underlying Gaussian process exhibits long-range dependence (LRD) weak convergence was shown by \citet{Dehling}. The limiting process can be represented as the product of a deterministic function and a Hermite process $(Z_m(t))_{0\leq t\leq1}$ which is a fractional Brownian motion if $m=1$ and a non-Gaussian process for $m\geq 2$. A first step in generalizing this result to multivariate observations was done by \citet{Marinucci}. He studied the asymptotics of the empirical process $(R_N(x,1))$ based on a two-dimensional LRD process. Roughly speaking one could find two different approaches in the literature to define LRD for a $p$-dimensional stochastic process $(Y_j)_{j\geq1}$. The first one is to take $p$ independent processes
$(Y_j^{(1)})_{j\geq1},\ldots,(Y_j^{(p)})_{j\geq1}$ such that each of them exhibits LRD, i.e.
\begin{equation}\label{EinleitungLRD}
\operatorname{Cov}(Y_1^{(i)},Y_{k+1}^{(i)})=L_i(k)k^{-D_i},~~~0<D_i<1
\end{equation}
and $L_i$ is slowly varying at infinity. For $p=2$ this construction was used by \citet{Marinucci} and \citet[$D_1=D_2$]{Taufer}. A more general setup was used by \citet{Ho} and \citet{Arcones}. They call a $p$-dimensional Gaussian process LRD if both the covariance function of each component and further the cross-covariance  $\operatorname{Cov}(Y_1^{(i)},Y_{k+1}^{(j)})$ are of type \eqref{EinleitungLRD}. \citet{Pipiras} stated a very precise definition of LRD in time and spectral domain and showed under which conditions these are equivalent.

In the present paper we want to establish new non-central limit theorems for the sequential empirical process based on $p$-dimensional LRD data. In the tradition of the initial work of \citet{Dehling} we will focus on subordinated Gaussian processes. More precisely we will consider two different types of subordination. This approach will help us to make the proofs more transparent. In section \ref{section one-dimensional} the $p$-dimensional observations are generated by a one-dimensional Gaussian process. In section \ref{section multi} the process $(G(X_j))$ is $q$-dimensional, where the underlying Gaussian process itself is $p$-dimensional.  For both cases we prove a weak uniform reduction principle and show that $(R_N(x,t))$ converges weakly to a Hermite process as in the one-dimensional case (section \ref{section one-dimensional}) or to a sum of generalized Hermite processes (section \ref{section multi}), respectively.  

\section{One-dimensional subordination}\label{section one-dimensional}
The simplest way to get a $p$-dimensional subordinated Gaussian random vector is the following construction. Let $(X_j)_{j\geq 1}$ be an one-dimensional stationary Gaussian process with $EX_1=0$ and $EX_1^2=1$. Moreover, let this sequence exhibits long-range dependence such that the covariance function $r(k)=EX_1X_{k+1}$ satisfies
\begin{equation}\label{covariance}
r(k)=k^{-D}L(k),
\end{equation}
where $0<D<1$ and $L$ is a function which is slowly varying at infinity. For any measurable functions $G_1,\ldots,G_p:\RR\rightarrow\RR$ we consider the process
$(Y_j)_{j\geq1}$, where $Y_j$ is a random vector given by
\begin{equation*}
 Y_j:=G(X_j):=(G_1(X_j),\ldots.G_p(X_j)).
\end{equation*}
To simplify reading, we first introduce some notations. We denote by $F$ the common distribution function of
$Y_1$ and by $F_k$ the distribution function of $G_k(X_1)$. For an element $(x^{(1)},\ldots,x^{(p)})\in\RR^p$ we write simply $x$ and by $x\leq y$ we mean $x_i\leq y_i$,
for all $1\leq i\leq p$. The sequential empirical process based on these $p$-dimensional observations can be written just as \eqref{seq} as
\begin{equation*}
 R_N(x,t):=\sum_{j=1}^{\lfloor Nt\rfloor}\left(1_{\{Y_j\leq x\}}-F(x)\right).
\end{equation*}
\subsection{Results}
As in the initial work of \citet{Dehling} the asymptic behaviour of $(R_N(x,t))$ is determined by the leading term of the so-called Hermite expansion. These relates to the collection of Hermite polynomials $(H_n)_{n\geq0}$,
\begin{equation*}
 H_n(y):=(-1)^ne^{y^2/2}\frac{d^n}{dy^n}e^{-y^2/2},
\end{equation*}
which forms an orthogonal basis of $L_2(\varphi(y)dy)$, where $\varphi$ is the standard normal density. Since $(1_{\{G(\cdot)\leq x\}}-F(x))$ is square integrable for
any $x\in\RR^p$, we have the following $L_2$-representation
\begin{equation*}
 1_{\{Y_j\leq x\}}-F(x)=\sum_{q=0}^{\infty}\frac{J_q(x)}{q!}H_q(X_j).
\end{equation*}
The Hermite coefficients are given by the inner product, i.e.
\begin{equation}\label{hc}
 J_q(x)=E(1_{\{Y_j\leq x\}}-F(x))H_q(X_j).
\end{equation}
We call the index $m(x)$ of the first nonzero Hermite coefficient the Hermite rank of $(1_{\{G(\cdot)\leq x\}}-F(x))$. Note that $m(x)\geq1$ for any $x\in\RR^p$.
\begin{Satz}\label{nclt}
Let $(X_j)_{j\geq1}$ be a standard one-dimensional Gaussian process satisfying \eqref{covariance} and let $G:\RR\rightarrow\RR^p$ be a measurable function. Furthermore, let $0<D<1/m$, where $m:=\min\{m(x):x\in\RR^p\}$. Then
\begin{equation*}
 \left\{d_N^{-1}R_N(x,t): (x,t)\in[-\infty,\infty]^p\times [0,1]\right\}
\end{equation*}
converges weakly in $D([-\infty,\infty]^p\times [0,1])$, to
\begin{equation*}
 \left\{\frac{J_m(x)}{m!}Z_m(t): (x,t)\in[-\infty,\infty]^p\times [0,1]\right\},
\end{equation*}
where $(Z_m(t))$ is a Hermite process.
\end{Satz}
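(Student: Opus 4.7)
The plan is to follow the Dehling--Taqqu paradigm: expand the centered indicator into Hermite chaoses, prove a uniform reduction principle that strips off all but the leading chaos, and then transfer the weak convergence of that leading chaos to $R_N$ via the continuous--mapping theorem. Concretely, for every $x \in \RR^p$ the function $\psi_x(y) := 1_{\{G(y) \leq x\}} - F(x)$ lies in $L_2(\varphi\,dy)$ and has Hermite rank at least $m$, so
\begin{equation*}
R_N(x,t) \;=\; \sum_{q \geq m} \frac{J_q(x)}{q!}\,T_{N,q}(t), \qquad T_{N,q}(t) := \sum_{j=1}^{\lfloor Nt\rfloor} H_q(X_j).
\end{equation*}
Note that $J_m(x) = 0$ whenever $m(x) > m$, so the conjectured limit $\frac{J_m(x)}{m!} Z_m(t)$ is automatically consistent with the pointwise Hermite rank.

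The heart of the proof is the uniform reduction principle
\begin{equation*}
d_N^{-1}\sup_{(x,t)}\Bigl|R_N(x,t) - \tfrac{J_m(x)}{m!}\,T_{N,m}(t)\Bigr| \;\xrightarrow{P}\; 0,
\end{equation*}
the $D([-\infty,\infty]^p \times [0,1])$ analogue of Dehling's scalar result. For fixed $(x,t)$ it is an $L_2$ estimate: orthogonality of Hermite polynomials plus the standard variance computation gives $\operatorname{Var}(T_{N,q}(t)) \asymp N^{2-qD}L(N)^q$ when $qD<1$ and $O(N)$ when $qD>1$, both of strictly smaller order than $d_N^2 = N^{2-mD}L(N)^m$ for every $q>m$ because $D < 1/m$; summing in $q$ via Parseval ($\sum_q J_q(x)^2/q! \leq \|\psi_x\|_2^2 \leq 1$) then yields the pointwise bound. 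Uniformity in $(x,t)$ is obtained by a chaining argument on a grid of quantile points in the $p$ marginals $F_1,\dots,F_p$ together with a time grid in $[0,1]$. The crucial point is that $1_{\{Y_j \leq x\}}$ is coordinatewise monotone in $x$, so oscillation between grid points is sandwiched by grid values; a union bound combined with moment estimates on increments of both $R_N$ and $\frac{J_m}{m!}T_{N,m}$ closes the argument as long as the grid is polynomially fine in $N$.

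With the reduction principle in place, Taqqu's non-central limit theorem in its functional form gives $d_N^{-1} T_{N,m}(\cdot) \Rightarrow Z_m(\cdot)$ in $D[0,1]$. Since $J_m$ is bounded (by Cauchy--Schwarz and $\|\psi_x\|_2 \leq 1$), the map $z \mapsto \{J_m(x) z(t)/m! : (x,t) \in [-\infty,\infty]^p \times [0,1]\}$ is continuous from $D[0,1]$ into $D([-\infty,\infty]^p \times [0,1])$, so a continuous--mapping plus Slutsky argument combines the limit of the leading term with the reduction principle to yield the claimed weak convergence.

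The main obstacle is the uniformity in $x \in [-\infty,\infty]^p$. In one dimension Dehling exploits full monotonicity of the indicator, whereas in $\RR^p$ only orthant (coordinatewise) monotonicity is available, so the grid must be built on the $p$ separate marginals and an increment bound of the form $E|R_N(x,t)-R_N(y,t)|^2 \lesssim d_N^2 \sum_{i=1}^p |F_i(x^{(i)})-F_i(y^{(i)})|^{\alpha}$ has to be established via a careful diagram--formula computation on the joint law of $(G_1(X_1),\dots,G_p(X_1))$. This is where the $p$-dimensional nature of the problem really enters; once the multivariate increment bound is in hand, the remainder of the argument is a faithful transcription of the scalar case.
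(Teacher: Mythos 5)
Your global architecture is the same as the paper's: Hermite expansion, a uniform reduction principle, Taqqu's functional non-central limit theorem for $d_N^{-1}\sum_{j\le \lfloor Nt\rfloor}H_m(X_j)$, and then a continuous-mapping/Slutsky step using that $J_m$ is bounded and $Z_m$ has continuous paths. The pointwise $L_2$ estimate and the final step are fine. The gap is in the only genuinely new part, the uniformity in $x$, where your concrete plan would not close. First, you propose to chain $R_N$ itself via a bound $E|R_N(x,t)-R_N(y,t)|^2\lesssim d_N^2\sum_i|F_i(x^{(i)})-F_i(y^{(i)})|^\alpha$. There is no gain over $d_N^2$ in such a bound, and for general measurable $G$ one cannot do better than $\alpha=1$: the dominant increment is the leading chaos, with variance exactly $d_N^2\bigl((J_m(y)-J_m(x))/m!\bigr)^2$, and Cauchy--Schwarz only gives $|J_m(y)-J_m(x)|\le \|H_m\|_2\,\bigl(\sum_i(F_i(y^{(i)})-F_i(x^{(i)}))\bigr)^{1/2}$; no diagram-formula computation improves this without smoothness assumptions on the law of $G(X_1)$. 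Consequently a union/chaining bound over a polynomially fine grid is not $o(1)$ relative to $d_N^2$. The paper avoids this entirely by applying the second-moment estimate only to the remainder $S_N(n,A)=d_N^{-1}\sum_{j\le n}\bigl(1_{\{Y_j\in A\}}-P(Y_1\in A)-\frac{J_m(A)}{m!}H_m(X_j)\bigr)$, whose Hermite rank is at least $m+1$; this is what produces the decisive extra factor $N^{-\gamma}$ in Lemma \ref{momente}, and it is this factor, not an exponent $\alpha>1$, that makes the chaining work.

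Second, a grid of $F_i$-quantiles with a one-scale union bound over grid points also fails, because the pointwise bound at a grid point is proportional to $P(Y_1\le x)\asymp 1$, so summing over the polynomially many grid points swamps $N^{-\gamma}$. The paper's device is the augmented monotone functions $\Lambda_j(x^{(j)})=F_j(x^{(j)})+(m!)^{-1}\int_{\{G_j(s)\le x^{(j)}\}}|H_m(s)|\varphi(s)\,ds$ and the hierarchy of dyadic partitions of Definition \ref{Definition1}: the telescoping representation \eqref{darstellung} writes the orthant $\{y\le a_x(K)\}$ as a disjoint union of cells taken from only $(K+1)^p-1$ partitions, so within each partition the cell probabilities sum to one and the union bound costs only a power of $K\asymp\log N$; and, by Lemma \ref{Lemma 3}, the $J_m$- and $F$-increments over the residual box $(a_x(K),b_x(K))$ are bounded by the $\Lambda$-mesh $p\Lambda_1(\infty)2^{-K}$, so the leading chaos there is controlled by $2^{-K}d_N^{-1}\bigl|\sum_{j\le n}H_m(X_j)\bigr|$ --- a single random variable, not cellwise moment bounds. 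Your proposal contains neither the restriction of the moment bound to the rank-$(m+1)$ remainder nor the $\Lambda$-based multiscale partition (nor the maximum over $n\le N$ required by Theorem \ref{rp}), and these are precisely the ingredients that make the uniform reduction principle, and hence Theorem \ref{nclt}, go through.
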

\begin{Remark}
It was shown by \citet[Corollary 4.1]{Taqqu1975} that 
\begin{equation*}
d_N^2 \approx N^{2-mD}L(N).
\end{equation*}
\end{Remark}
\begin{Remark}
The Hermite process $(Z_m(t))_{0\leq t\leq1}$  is given by a stochastic integral in spectral domain, more precisely
\begin{equation}\label{Spektraldarstellung}
Z_m(t)=c\int_{\RR^m}^{''}\frac{e^{it(x_1+\ldots+x_m)}-1}{i(x_1+\ldots+x_m)}\prod_{j=1}^m|x_j|^{-(1-D)/2}B(dx_1)\cdots B(dx_m),
\end{equation}
where $B$ is a suitable random spectral measure and $c\in\RR$ is a constant which only depends on $m$ and $D$. For details and further representations see \citet{Taqqu1979} and \citet{Taqqu2010}.
\end{Remark}
\begin{Satz}[Reduction principle]\label{rp}
 Let $(X_j)_{j\geq1}$ be a standard one-dimensional Gaussian process satisfying \eqref{covariance} and let $G:\RR\rightarrow\RR^p$ be a measurable function. Furthermore, let $0<D<1/m$, where $m:=\min\{m(x):x\in\RR^p\}$. Then there exist constants $C,\kappa>0$ such that for any $0<\varepsilon\leq1$
\begin{align}
 P\Biggl(&\max_{n\leq N}\sup_{x\in[-\infty,\infty]^p}d_N^{-1}\left|\sum_{j=1}^n\left(1_{\{Y_k\leq x\}}-F(x)-\frac{J_m(x)}{m!}H_m(X_j)\right)\right|
         >\varepsilon\Biggr)\notag\\
\leq &CN^{-\kappa}(1+\varepsilon^{-3}).\notag
\end{align}
The normalization factor is given by
\begin{equation*}
 d_N^2=\operatorname{Var}\left(\sum_{j=1}^N H_m(X_j)\right).
\end{equation*}
\end{Satz}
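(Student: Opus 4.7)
Write $U_j(x):=1_{\{Y_j\leq x\}}-F(x)-\tfrac{J_m(x)}{m!}H_m(X_j)$ and $S_n(x):=\sum_{j=1}^n U_j(x)$. By construction the Hermite expansion of $U_j(x)$ starts at level $m+1$, so by orthogonality of the Hermite polynomials under the standard Gaussian measure,
$$\operatorname{Var}(S_n(x))=\sum_{q\geq m+1}\frac{J_q(x)^2}{q!}\operatorname{Var}\Bigl(\sum_{j=1}^n H_q(X_j)\Bigr).$$
The classical formula $\operatorname{Var}(\sum_{j=1}^n H_q(X_j))\leq C_q\, n^{\max(2-qD,1)}L(n)^q$ (\citet{Taqqu1975}, up to a log factor when $qD=1$), combined with Parseval's bound $\sum_q J_q(x)^2/q!\leq 1$ applied to the tail $q>Q_0$ for some fixed $Q_0$ with $Q_0D>1$, yields a $\kappa_0>0$ with $\operatorname{Var}(S_n(x))\leq Cd_n^2\,n^{-\kappa_0}$ uniformly in $x\in[-\infty,\infty]^p$. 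A Móricz-type maximal inequality then upgrades this to $E[\max_{n\leq N}S_n(x)^2]\leq C'd_N^2N^{-\kappa_0}$.

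\textbf{Grid and oscillation.} I would introduce a quantile grid $\mathcal{G}_N=\{(F_1^{-1}(k_1N^{-\alpha}),\ldots,F_p^{-1}(k_pN^{-\alpha})):0\leq k_i\leq\lceil N^\alpha\rceil\}$ of cardinality $O(N^{\alpha p})$, with $\alpha>0$ to be chosen so that $\alpha p<\kappa_0$ and $\alpha>mD/2$; every $x$ is then sandwiched between corners $\underline{x}\leq x\leq\overline{x}$ satisfying $F(\overline{x})-F(\underline{x})\leq pN^{-\alpha}$. A union bound on the pointwise estimate yields $P(\max_{x\in\mathcal{G}_N}\max_{n\leq N}|S_n(x)|>d_N\varepsilon)\leq C_1\varepsilon^{-2}N^{\alpha p-\kappa_0}$. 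For $x\in[\underline{x},\overline{x}]$, monotonicity of the indicator and the Cauchy--Schwarz estimate $|J_m(x)-J_m(\underline{x})|\leq\sqrt{m!}\,(F(\overline{x})-F(\underline{x}))^{1/2}$ (obtained by applying Cauchy--Schwarz to \eqref{hc} and using $EH_m(X_1)=0$) give
$$|S_n(x)-S_n(\underline{x})|\leq T_n(\underline{x},\overline{x})+n(F(\overline{x})-F(\underline{x}))+C(F(\overline{x})-F(\underline{x}))^{1/2}\Bigl|\sum_{j=1}^n H_m(X_j)\Bigr|,$$
where $T_n(\underline{x},\overline{x}):=\sum_{j=1}^n1_{\{\underline{x}<Y_j\leq\overline{x}\}}$. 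The deterministic middle term is $O(N^{1-\alpha})=o(d_N)$ by the choice $\alpha>mD/2$; the third term, divided by $d_N$, is small since $E[\max_{n\leq N}(\sum_j H_m(X_j))^2]\leq Cd_N^2$ and we gain the factor $(pN^{-\alpha})^{1/2}$; and $T_n-ET_n$ is itself a subordinated partial sum with leading Hermite coefficient $\leq C(F(\overline{x})-F(\underline{x}))^{1/2}$, so the variance analysis of the first paragraph applies to it and yields the same polynomial gain in $N$.

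\textbf{Assembly.} Combining the grid estimate with Markov inequalities on the three oscillation pieces (at worst second moments for the random parts, together with one further $\varepsilon^{-1}$ coming from a coarse first-moment bound $P(T_n>\varepsilon d_N)\leq ET_n/(\varepsilon d_N)$) gives an estimate of the form $CN^{-\kappa}(\varepsilon^{-2}+\varepsilon^{-3})$, which for $0<\varepsilon\leq1$ is dominated by $CN^{-\kappa}(1+\varepsilon^{-3})$.

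\textbf{Main obstacle.} The technical heart is the uniform-in-$x$ variance bound in the first paragraph: the tail $\sum_{q>Q_0}J_q(x)^2\operatorname{Var}(\sum H_q(X_j))/q!$ must be controlled by an $x$-free constant times $n$, while the intermediate levels $m+1\leq q\leq Q_0$ cross the three regimes $qD<1,=1,>1$, each with its own variance asymptotic. Moreover, the measurability-only hypothesis on $G$ means that $F$ need not be continuous, so the chaining over the cells of $\mathcal{G}_N$ must rest entirely on the monotone indicator sandwich together with the Cauchy--Schwarz Hölder estimate for $J_m$, neither of which requires smoothness of $F$.
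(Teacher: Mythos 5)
Your reduction of the problem to a pointwise second-moment bound is sound in substance (modulo the factor in your first display, which should be $J_q(x)^2/(q!)^2$ times $\operatorname{Var}(\sum_{j\le n}H_q(X_j))$), and the "main obstacle" you identify is not really one: for $q\ge m+1$ one simply dominates $|r(j-k)|^q\le|r(j-k)|^{m+1}$, which is exactly how Lemma \ref{momente} obtains a bound of the form $C(n/N)N^{-\gamma}P(Y_1\in A)$ without any case distinction in $q$. The genuine gap is in your chaining step, and it is quantitative, not cosmetic. Since the remainder generically has Hermite rank exactly $m+1$, the best uniform gain you can have is $\operatorname{Var}(S_n(x))\le Cd_n^2n^{-\kappa_0}$ with $\kappa_0<\min(D,1-mD)\le D$. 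Your union bound over the $O(N^{\alpha p})$ grid points multiplies this gain by the grid cardinality, forcing $\alpha p<\kappa_0\le D$, i.e. $\alpha<D/p$; on the other hand your deterministic oscillation term $n\bigl(F(\overline{x})-F(\underline{x})\bigr)/d_N\asymp N^{mD/2-\alpha}$ (up to slowly varying factors) forces $\alpha>mD/2\ge D/p$ for every $p\ge2$ (and already for $p=1$ when $m\ge2$). So the two conditions you impose on $\alpha$ are mutually exclusive precisely in the setting of this theorem, and no admissible grid resolution exists. The root cause is that at a grid point the second moment of $S_n(\underline{x})$ is of size $d_n^2N^{-\kappa_0}$ with a constant of order one, so a flat union bound over polynomially many points destroys the gain; only increments over small cells enjoy a bound proportional to the cell probability.

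The paper's proof is organized exactly to circumvent this. Instead of evaluating the process at grid points, $S_N(n,x)$ is decomposed telescopically, via Lemma \ref{Lemma 1} and \eqref{darstellung}, into increments over cells of a hierarchy of partitions generated by the monotone majorant $\Lambda$, which bounds both the $F$- and the $J_m$-increments linearly (Lemma \ref{Lemma 3}; note the linear bound through $\int|H_m|\varphi$, not a Cauchy--Schwarz square root). Each cell is then treated with the probability-proportional bound of Lemma \ref{momente}, so that within one partition the probabilities sum to $1$ rather than to the number of cells, and the number of partitions used is only of order $K^p$ with $K\asymp\log_2(\varepsilon^{-1}Nd_N^{-1})$ — polylogarithmic in $N$ and logarithmic in $\varepsilon^{-1}$. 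This is where the factor $K^{5p}\le C\varepsilon^{-1}N^{\delta}$, and hence the $\varepsilon^{-3}$ in the statement, comes from; the maximum over $n\le N$ is then handled as in \citet{Dehling}. A single-scale grid with a pointwise union bound, as in your proposal, cannot reproduce this mechanism, so as written the argument does not close; repairing it essentially means reinstating the hierarchical, cell-probability-weighted chaining of the paper.
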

The statement of Theorem \ref{rp} is strongly associated to the reduction principle of \citet{Dehling} in the
case $p=1$. They were the first who established such a theorem uniformly in $x$ and $t$. \citet{Giraitis} studied the reduction principle for the seqential
empirical process of long-range dependent moving average data. \citet{Jannis} showed that the reduction principle of Dehling and Taqqu is still valid, if the
càdlàg space is equipped with a weighted supremum norm. In all cases the weak reduction principle can be applied to prove weak convergence of the normalized
sequential empirical processs. 

\subsection{The Hermite Rank}
In order to use Theorem \ref{nclt} for studying the asymptotic behaviour of $R_N(x,t)$ it is important to know the Hermite rank $m$ of $\{1_{\{G(\cdot)\leq x\}}:x\in\RR^p\}$. For $p=1$ we usually have $m\leq 2$, see \citet[pp. 1770]{Dehling}. Therefore the question arises, is there a connection between the Hermite ranks $m_j$ of
$\{1_{\{G_j(\cdot)\leq x\}}:x\in\RR\}$ and $m$?
\begin{Lemma}\label{rank}
Let $m_j$ be the Hermite rank of $\{1_{\{G_j(\cdot)\leq x\}}:x\in\RR\}$, $1\leq j\leq p$, and let $m$ be the Hermite rank of $\{1_{\{G(\cdot)\leq x\}}:x\in\RR^{p}\}$. Then 
\begin{equation*}
m\leq\min\{m_j: 1\leq j\leq p\}.
\end{equation*}
\end{Lemma}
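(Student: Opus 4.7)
The plan is to exploit the elementary observation that, if $x\in\RR^p$ has all coordinates except the $j$-th equal to $+\infty$, then $1_{\{G(X_1)\leq x\}}$ degenerates to the marginal indicator $1_{\{G_j(X_1)\leq x_j\}}$. Approximating such a degenerate $x$ by a sequence in $\RR^p$ should let me transfer a nonzero $m_j$-th Hermite coefficient from the one-dimensional setting to the $p$-dimensional one.

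More concretely, I would first fix $j\in\{1,\ldots,p\}$ and, by the very definition of $m_j$, pick $x_j^\ast\in\RR$ with
\begin{equation*}
E\bigl[(1_{\{G_j(X_1)\leq x_j^\ast\}}-F_j(x_j^\ast))H_{m_j}(X_1)\bigr]\neq 0.
\end{equation*}
For $n\in\NN$ I set $x^{(n)}\in\RR^p$ by $x^{(n)}_i:=n$ for $i\neq j$ and $x^{(n)}_j:=x_j^\ast$. Because each $G_i$ is real-valued, $1_{\{G(X_1)\leq x^{(n)}\}}\to 1_{\{G_j(X_1)\leq x_j^\ast\}}$ pointwise and $F(x^{(n)})\to F_j(x_j^\ast)$ as $n\to\infty$. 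Using $2|H_{m_j}(X_1)|$ as a dominating function (integrable since Hermite polynomials have all Gaussian moments), dominated convergence applied to \eqref{hc} gives
\begin{equation*}
J_{m_j}(x^{(n)})\longrightarrow E\bigl[(1_{\{G_j(X_1)\leq x_j^\ast\}}-F_j(x_j^\ast))H_{m_j}(X_1)\bigr]\neq 0.
\end{equation*}
Hence $J_{m_j}(x^{(n)})\neq 0$ for all sufficiently large $n$. By the definition of $m(x)$ as the smallest index of a nonzero Hermite coefficient, this forces $m(x^{(n)})\leq m_j$, and taking the infimum over $x\in\RR^p$ yields $m\leq m_j$. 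Since $j$ was arbitrary, $m\leq\min_{1\leq j\leq p}m_j$.

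There is essentially no obstacle here: once the definitions are unfolded, the proof reduces to a coordinate-wise marginalization plus a routine dominated-convergence step, the latter justified by the Gaussian integrability of every Hermite polynomial.
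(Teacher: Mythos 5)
Your proposal is correct and follows essentially the same route as the paper: both send the remaining coordinates to $+\infty$ and use dominated convergence (with the Gaussian integrability of $H_{m_j}$) to transfer a nonzero marginal Hermite coefficient to a point of $\RR^p$, forcing $m\leq m_j$. The only cosmetic difference is that you argue for each $j$ and minimize at the end, while the paper fixes the minimizing index from the start.
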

\begin{proof}
For simplicity let $m_1=\min\{m_j:1\leq j\leq p\}$. Then there exist $x\in\RR$ s.t. $E(1_{\{G_1(X_1)\leq x\}}H_{m_1}(X_1))\neq 0$. By dominated convergence we have
\begin{align*}
\lim_{n\rightarrow\infty}&E(1_{\{G_1(X_1)\leq x,G_2(X_1)\leq n,\ldots,G_p(X_1)\leq n\}}H_{m_1}(X_1))\\
=&E(\lim_{n\rightarrow\infty}1_{\{G_1(X_1)\leq x,G_2(X_1)\leq n,\ldots,G_p(X_1)\leq n\}}H_{m_1}(X_1))\\
=&E(1_{\{G_1(X_1)\leq x\}}H_{m_1}(X_1))\neq 0.
\end{align*}
Therfore it exists an index $n_0$ so that $E(1_{\{G_1(X_1)\leq x,G_2(X_1)\leq n_0,\ldots,G_p(X_1)\leq n_0\}}H_{m_1}(X_1))\neq 0$ and this implies $m\leq m_1$.
\end{proof}
\begin{example}
Assume we have $\min\{m_j: 1\leq j\leq p\}=1$. Then we get by Lemma \ref{rank} $m=1$. In other words, if there is at least one function $G_j$ s.t.  $\{1_{\{G_j(\cdot)\leq x\}}:x\in\RR\}$ has Hermite rank $1$ then $R_N(x,t)$ converges weakly to a fractional Brownian motion.
\end{example}
\begin{example}
Let $p=2$, $G_1(s)=s^2$ and $G_2(s)=1_A(s)$, where $A=(-\infty,-c)\cup(0,c)$ and $c=(-2\ln(1/2))^{1/2}$. Note that $\{s\in\RR:G_2(s)\leq x\}\in\{\emptyset, A^c, \RR\}$ for all $x\in\RR$ and
\begin{align*}
\int\limits_{-c}^0 x\varphi(x)dx+\int\limits_c^{\infty} x\varphi(x)dx=0,\\
\int\limits_{-c}^0 (x^2-1)\varphi(x)dx+\int\limits_c^{\infty} (x^2-1)\varphi(x)dx=0.
\end{align*}
Using numeric integration we get $E(1_{\{G_2(X_1)\leq x\}}H_{3}(X_1))\neq 0$ for $0\leq x<1$. This implies $\{1_{\{G_2(\cdot)\leq x\}}:x\in\RR\}$ has Hermite rank $3$. Furthermore, we know by \citet[Example 2]{Dehling} that $\{1_{\{G_1(\cdot)\leq x\}}:x\in\RR\}$ has Hermite rank $2$.\\
Now we are in the special situation in which the Hermite rank corresponding to the common observation is really smaller than $2$, since
\begin{equation*}
E(1_{\{G_1(X_1)\leq c^2,G_2(X_1)\leq 0\}}H_{1}(X_1))=\int\limits_{-c}^0 x\varphi(x)dx=2(2\pi)^{-1/2}.
\end{equation*}
 \end{example}
 \section{Multivariate subordination}\label{section multi}
Let $G:\RR^p\rightarrow\RR^q$ a  measurable function and let $(X_j)_{j\geq1}=((X^{(1)}_j,\ldots,X^{(p)}_j))_{j\geq1}$ be a $p$-dimensional stationary Gaussian process with the following properties
\begin{align}
EX^{(i)}_1=&0~~~~1\leq i \leq p\label{a1}\\
EX^{(i)}_1X^{(j}_1=&\delta_{ij}\label{a2}\\
r^{(i,j)}(k):=EX^{(i)}_1X^{(j)}_{k+1}=&c_{ij}L(k)k^{-D},\label{a3}
\end{align}
where $c_{ij}\in\RR$ are constants depending only on $i$ and $j$, $L(k)$ is slowly varying at infinity and $0<D<1$. From now we assume that the $\RR^q$-valued stochastic process $(Y_j)_{j\geq1}$ is given by $Y_j=G(X_j)$.\\ 
The conditions \eqref{a1}, \eqref{a2} and \eqref{a3} were proposed by \citet{Arcones} and they are also compatible with the very general definition of multivariate long-range dependence given by \citet{Pipiras}. 

In order to prove a weak uniform reduction principle we have to  establish a multivariate Hermite decomposition of $R_N(x,t)$. Let $L^2$ be the space of square integrable functions with respect to the $p$-dimensional standard normal distribution. An orthogonal basis of these space is given by the collection of multivariate Hermite polynomials 
\begin{equation*}
H_{l_1,\ldots,l_p}(x)=H_{l_1}(x^{(1)})\cdots H_{l_p}(x^{(p)})~~~~l_1,\ldots,l_p\in\NN,
\end{equation*}
where $H_{l_i}$ is an ordinary one-dimensional Hermite polynomial. Therefore, for any $x\in\RR^q$ the following series expansion holds in $L^2$
\begin{align}
1_{\{Y_j\leq x\}}-F(x)=&\sum_{k=1}^{\infty}\sum_{l_1+\ldots+l_p=k}\frac{J_{l_1,\ldots,l_p}(x)}{l_1!\cdots l_p!}H_{l_1,\ldots,l_p}(X_j),\label{multidarstellung}\\
\intertext{where}
J_{l_1,\ldots,l_p}(x)=&E(1_{\{G(X_j)\leq x\}}H_{l_1,\ldots,l_p}(X_j)).\numberthis\label{hc2}
\end{align}
The maximum index $m$ satisfying $J_{l_1,\ldots,l_p}(x)=0$ whenever $l_1+\ldots+l_p<m$ is called the Hermite rank of $1_{\{G(\cdot)\leq x\}}-F(x)$ and the Hermite rank of $(1_{\{G(\cdot)\leq x\}}-F(x))_{x\in\RR^q}$ is defined as the minimum of all pointwise Hermite ranks.

With respect to \eqref{multidarstellung} one recognizes that a limiting process for  $(R_N(x,t))$ in the context of mutivariate subordination differs from the one in section 2, since there could be up to $m+p-1\choose m$ multivariate Hermite polynomials contributing to the limit.\\
Let $(B^{(1)}),\ldots,B^{(p)})$ the joint random spectral measure satisfying
\begin{gather*}
\left\{d_N^{-1}\sum_{j=1}^{\lfloor Nt\rfloor}\left(H_m(X_j^{(1)}),\ldots,H_m(X_j^{(p)})\right):t\in[0,1]\right\}\xrightarrow{~~d~~}\\
\left\{\left(Z_m^{(1)}(t),\ldots,Z_m^{(p)}(t)\right):t\in[0,1]\right\},\numberthis\label{Hermite gemeinsam}
\end{gather*}
where $Z_m^{(k)}(t)$ is defined as in \eqref{Spektraldarstellung} with $B=B^{(k)}$ for $1\leq k \leq p$. 
\begin{Satz}\label{non central mv}
 Let $(X_j)_{j\geq1}$ be a $p$-dimensional Gaussian process satisfying \eqref{a1}, \eqref{a2} and \eqref{a3} and let 
 $G:\RR^p\rightarrow\RR^q$ be a measurable function. Furthermore, let $0<D<1/m$, where $m$ is the Hermite rank of $(1_{\{G(\cdot)\leq x\}}-F(x))_{x\in\RR^q}$. Then 
 \begin{equation*}
 \left\{d_N^{-1}R_N(x,t): (x,t)\in[-\infty,\infty]^q\times [0,1]\right\}
\end{equation*}
converges weakly in $D([-\infty,\infty]^q\times [0,1])$, to
 \begin{multline}\label{Grenzprozess}
 \Biggl\{c\sum_{j_1,\ldots,j_m=1}^p \tilde{J}_{j_1,\ldots,j_m}(x)\int_{\RR^m}^{''}\frac{e^{it(x_1+\ldots+x_m)}-1}{i(x_1+\ldots+x_m)}\prod_{j=1}^m|x_j|^{-(1-D)/2}\\B^{(j_1)}(dx_1)\cdots B^{(j_m)}(dx_m): 
(x,t)\in[-\infty,\infty]^q\times [0,1]\Biggr\} .
\end{multline}
The constant $c$ is the same as in \eqref{Spektraldarstellung} and
\begin{equation*}
\tilde{J}_{j_1,\ldots,j_m}(x)=(m!)^{-1}E\left(1_{\{G(X_1)\leq x\}}\prod_{i=1}^p H_{i(j_1,\ldots,j_m)}(X_1^{(j)})\right),
\end{equation*}
where $i(j_1,\ldots,j_m)$ is the number of indeces $j_1,\ldots,j_m$ that are eqal to $i$.
\end{Satz}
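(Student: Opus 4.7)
The plan is to replicate in the multivariate-subordination setting the two-step strategy used for Theorem \ref{nclt}: first prove a uniform reduction principle that replaces $R_N(x,t)$ by the sum of its Hermite terms of total degree exactly $m$, at a cost of $o(d_N)$ in probability uniformly in $(x,t)$; then identify the limit of this leading-order sum using the joint spectral representation of multiple Wiener--Itô integrals together with the joint non-central limit theorem \eqref{Hermite gemeinsam}.

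For the reduction, split $R_N(x,t)$ via \eqref{multidarstellung} into the degree-$m$ part and a higher-order remainder $R_N^{>m}(x,t)$. Orthogonality of the multivariate Hermite polynomials and the diagram formula of \citet{Arcones} bound $\operatorname{Var}(R_N^{>m}(x,t))$ by a sum, over $k>m$, of Parseval-type coefficient factors $c_k(x):=\sum_{l_1+\ldots+l_p=k}J_{l_1,\ldots,l_p}(x)^2/(l_1!\cdots l_p!)$, satisfying $c_k(x)\leq\operatorname{Var}(1_{\{Y_1\leq x\}})\leq 1$, times polynomial combinations of $|r^{(\alpha,\beta)}(|i-j|)|$ of total degree $k$. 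Karamata-type estimates under $D<1/m$ then give $\operatorname{Var}(R_N^{>m}(x,t))=o(d_N^2)$ uniformly in $x$. Uniformity over $(x,t)\in[-\infty,\infty]^q\times[0,1]$ is lifted from pointwise bounds by a dyadic chaining mirroring the proof of Theorem \ref{rp}, where the monotonicity of $x\mapsto 1_{\{Y_j\leq x\}}$ in each coordinate controls the metric entropy. I expect this chaining, together with the multivariate diagram bookkeeping across the $q$ coordinates, to be the main technical obstacle, as it has no direct univariate counterpart.

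For the limit of the degree-$m$ part, use the joint spectral representation $X_j^{(k)}=\int_{\RR}e^{ij\lambda}B^{(k)}(d\lambda)$, under which each $H_{l_1,\ldots,l_p}(X_j)$ with $l_1+\ldots+l_p=m$ admits a multiple Wiener--Itô representation with symmetric kernel $e^{ij(\lambda_1+\ldots+\lambda_m)}$ and integrator $\bigl(B^{(1)}\bigr)^{\otimes l_1}\otimes\cdots\otimes\bigl(B^{(p)}\bigr)^{\otimes l_p}$. Summing over $j\leq\lfloor Nt\rfloor$, normalizing by $d_N$ and rescaling the spectral variables, an extension of \eqref{Hermite gemeinsam} to arbitrary tensor products of the $B^{(k)}$'s -- provable along the lines of \citet{Taqqu1979} and continuous mapping in spectral form -- yields convergence, jointly in $t$, of each such sum to the corresponding multiple integral appearing in \eqref{Grenzprozess}. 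Symmetry of the kernel in $\lambda_1,\ldots,\lambda_m$ implies that all $m!/(l_1!\cdots l_p!)$ ordered tuples $(j_1,\ldots,j_m)\in\{1,\ldots,p\}^m$ with $i(j_1,\ldots,j_m)=l_i$ produce the same integral; regrouping $\sum_{l_1+\ldots+l_p=m}(l_1!\cdots l_p!)^{-1}J_{l_1,\ldots,l_p}(x)H_{l_1,\ldots,l_p}(X_j)$ as a sum over ordered tuples then produces precisely the coefficients $\tilde{J}_{j_1,\ldots,j_m}(x)=(m!)^{-1}J_{l_1,\ldots,l_p}(x)$. Finally, tightness in $D([-\infty,\infty]^q\times[0,1])$ follows from hypercontractive moment bounds on Wiener chaos of order $m$ combined with monotonicity-in-$x$ arguments à la \citet{Dehling}.
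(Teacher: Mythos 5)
Your overall two-step architecture (uniform reduction to the total-degree-$m$ Hermite part, then identification of the limit of that part) is the same as the paper's, and since the reduction is precisely Theorem \ref{reduction mv} you may simply cite it rather than re-derive it; note that the paper proves it not with the diagram formula but with Arcones' covariance inequality (Lemma \ref{LemmaArcones}) combined with a blocking argument and a chaining over partitions generated by the measure $\Lambda$, which sidesteps the combinatorial bookkeeping your sketch of the higher-order tail would require. Where you genuinely diverge is the identification step: you represent each $H_{l_1,\ldots,l_p}(X_j)$ directly as an $m$-fold Wiener--Itô integral with integrator $(B^{(1)})^{\otimes l_1}\otimes\cdots\otimes(B^{(p)})^{\otimes l_p}$ and pass to the limit in the spectral domain, i.e.\ you essentially re-prove Arcones' Theorem 6 jointly over the finitely many multi-indices, whereas the paper avoids tensor-product integrators altogether: by Lemma \ref{LemmaRunge} and a linear-algebra trick (an invertible matrix of products $\prod_i(a^{(i)}_{k_1,\ldots,k_p})^{m_i}$) it rewrites the degree-$m$ part as a finite combination of univariate polynomials $H_m\bigl(\sum_i a^{(i)}_{k_1,\ldots,k_p}X_j^{(i)}\bigr)$, so that only the joint convergence \eqref{Hermite gemeinsam} of univariate Hermite sums is needed, followed by Dudley--Wichura almost sure representation and continuous mapping to attach the bounded coefficient functions of $x$. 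Your route buys a more direct derivation of \eqref{Grenzprozess}, and your symmetry/regrouping argument is correct: since the kernel is symmetric, $Z_{j_1,\ldots,j_m}(t)$ depends only on the multiplicities $i(j_1,\ldots,j_m)$, and the count $m!/(l_1!\cdots l_p!)$ of ordered tuples reproduces $\tilde J_{j_1,\ldots,j_m}(x)=(m!)^{-1}J_{l_1,\ldots,l_p}(x)$; the price is that the multivariate spectral limit theorem for tensor-product integrators is the heaviest input and must be established (or cited from Arcones/Fox--Taqqu) with the prelimit-to-limit change of spectral measure done carefully. One simplification available to you: hypercontractive moment bounds are not needed for tightness in $x$, because after the reduction the $x$-dependence enters only through finitely many bounded deterministic functions multiplying processes in $t$, which is exactly why the almost-sure-representation plus continuous-mapping argument of the paper suffices.
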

\begin{Remark}
\citet[Theorem 6]{Arcones} studied the generalized Hermite process \eqref{Grenzprozess} in the non-uniform case. Note that we have corrected the domain of integration, i.e. we have $\RR^m$ instead of $[-\pi,\pi]^m$. For further details on stochastic integrals with dependent integrators, see \citet{Fox}.
\end{Remark}
\begin{Remark}
Theorem \ref{non central mv} is not a corollary of Theorem 9 by \citet{Arcones}. Although Arcones states a non-central limit theorem for the empirical process indexed by functions, the class $\{1_{\{G(\cdot)\leq x\}}-f(x):x\in\RR^p\}$ does not satisfy the required bracketing condition.
\end{Remark}
\begin{Satz}[Reduction principle]\label{reduction mv}
 Let $(X_j)_{j\geq1}$ be a $p$-dimensional Gaussian process satisfying \eqref{a1}, \eqref{a2} and \eqref{a3} and let 
 $G:\RR^p\rightarrow\RR^q$ be a measurable function. Furthermore, let $0<D<1/m$, where $m$ is the Hermite rank of $(1_{\{G(\cdot)\leq x\}}-F(x))_{x\in\RR^q}$. Then there exist constants $C,\kappa>0$ such that for any $0<\varepsilon\leq1$
\begin{align}
 P\Biggl(&\max_{n\leq N}\sup_{x\in[-\infty,\infty]^q}d_N^{-1}\left|\sum_{j=1}^n\left(1_{\{Y_k\leq x\}}-F(x)-
 \sum_{l_1+\ldots+l_p=m}\frac{J_{l_1,\ldots,l_p}(x)}{l_1!\cdots l_p!}H_{l_1,\ldots,l_p}(X_j)\right)\right|
         >\varepsilon\Biggr)\notag\\
\leq &CN^{-\kappa}(1+\varepsilon^{-3}).\notag
\end{align}
The normalization factor is given by
\begin{equation*}
 d_N^2=\operatorname{Var}\left(\sum_{j=1}^N H_m(X_j^{(1)})\right).
\end{equation*}
\end{Satz}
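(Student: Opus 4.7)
Denote the remainder to be estimated by
\begin{equation*}
S_n(x):=\sum_{j=1}^n\Bigl(1_{\{Y_j\le x\}}-F(x)-\sum_{l_1+\cdots+l_p=m}\frac{J_{l_1,\ldots,l_p}(x)}{l_1!\cdots l_p!}H_{l_1,\ldots,l_p}(X_j)\Bigr).
\end{equation*}
By \eqref{multidarstellung} and the definition of $m$, every summand of $S_n(x)$ has multivariate Hermite rank at least $m+1$. The plan is to follow the uniform reduction scheme used for Theorem \ref{rp} (and for the scalar case in \citet{Dehling}): (i) obtain a sharp pointwise $L^2$-estimate of $S_N(x)$; (ii) promote it to $\max_{n\le N}$ via a Móricz-type maximal inequality, exploiting the fact that $S_n(x)$ lies in the sum of Wiener chaoses of order $\ge m+1$; (iii) uniformise in $x$ by discretising $[-\infty,\infty]^q$ on the $F_k$-quantile scale and using coordinate-wise monotonicity of $x\mapsto 1_{\{Y_j\le x\}}$ to control the oscillation between neighbouring grid points.

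Step (i) is the only place where the multivariate covariance structure genuinely matters. Using orthogonality of $\{H_{l_1,\ldots,l_p}\}$ under the $p$-dimensional standard Gaussian law together with the multivariate Hermite covariance inequality of \citet{Arcones}
\begin{equation*}
\bigl|E[H_{l_1,\ldots,l_p}(X_1)H_{l_1',\ldots,l_p'}(X_{1+k})]\bigr|\le C\,\psi(k)^{l_1+\cdots+l_p},\qquad \psi(k):=\max_{i,j}|r^{(i,j)}(k)|=O(L(k)k^{-D}),
\end{equation*}
and the Parseval bound $\sum_{l_1,\ldots,l_p}J_{l_1,\ldots,l_p}(x)^2/(l_1!\cdots l_p!)\le\operatorname{Var}(1_{\{G(X_1)\le x\}})\le 1$, one obtains
\begin{equation*}
\sup_{x}ES_N(x)^2\le C\sum_{j_1,j_2=1}^N\psi(|j_1-j_2|)^{m+1},
\end{equation*}
which is $O(N^{2-(m+1)D}L(N)^{m+1})$ if $(m+1)D<1$ and $O(N)$ otherwise. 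Dividing by $d_N^2\asymp N^{2-mD}L(N)$ yields the decisive gain $\sup_x ES_N(x)^2\le CN^{-\delta}d_N^2$ with $\delta:=\min(D,1-mD)>0$, and a Móricz-type maximal inequality upgrades this to $E\max_{n\le N}S_n(x)^2\le C(\log N)^2N^{-\delta}d_N^2$.

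For step (iii), fix a rectangular grid $\pi_N\subset[-\infty,\infty]^q$ whose marginal mesh on the $F_k$-quantile scale is a suitable power of $\varepsilon$ times $N^{-a}$ for small $a>0$. Between neighbouring grid points $x^-\le x\le x^+$, the sandwich
\begin{equation*}
1_{\{Y_j\le x^-\}}-F(x^+)\le 1_{\{Y_j\le x\}}-F(x)\le 1_{\{Y_j\le x^+\}}-F(x^-)
\end{equation*}
together with Cauchy--Schwarz on the Hermite-polynomial correction (whose $L^2$-norm is bounded by $\sqrt{F(x^+)-F(x^-)}$) reduces the oscillation between grid points to a deterministic $O(n(F(x^+)-F(x^-)))$ term plus a centred remainder whose variance is absorbed into step (i). Markov's inequality, the union bound over $\pi_N$, and balancing the mesh against $\varepsilon$ yield the claimed $CN^{-\kappa}(1+\varepsilon^{-3})$ after choosing $a$ small enough that all powers of $N$ are strictly negative.

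The main obstacle is the Arcones covariance inequality in step (i): it substitutes for the one-line estimate $|E H_m(X_j)H_m(X_{j+k})|=m!\,r(k)^m$ that drives the proof of Theorem \ref{rp}, and is the only place where the full cross-covariance structure \eqref{a3} is essential. Once this estimate is in hand, the chaining and Móricz machinery and the monotonicity argument in $x$ carry over from the scalar-subordination case of Theorem \ref{rp} with only cosmetic changes, the crucial point being that the number of multi-indices with $l_1+\cdots+l_p=k$ grows only polynomially in $k$ and is absorbed into the constant $C$ without affecting the exponent.
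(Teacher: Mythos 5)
Your steps (i) and (ii) are in the spirit of the paper (step (i) is essentially Lemma \ref{momente2}, i.e.\ Arcones' covariance inequality applied to the rank-$(m+1)$ remainder), but your uniformisation step (iii) has a genuine quantitative gap: a single-scale grid with a union bound cannot produce the stated bound. The deterministic oscillation term forces the marginal mesh (on the quantile scale) to be of order $\varepsilon d_N/N\approx\varepsilon N^{-mD/2}$, so the grid has of order $(\varepsilon^{-1}N^{mD/2})^q$ points. Chebyshev with your uniform-in-$x$ variance bound gives only $\varepsilon^{-2}N^{-\delta}$ per point, with $\delta=\min(D,1-mD)$ up to slowly varying factors, and the union bound then yields something of order $\varepsilon^{-(2+q)}N^{qmD/2-\delta}$. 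This is not of the form $N^{-\kappa}(1+\varepsilon^{-3})$: the $N$-exponent is positive whenever $D$ is close to $1/m$ (since $\delta\to0$ while $qmD/2$ stays bounded away from $0$), and the $\varepsilon$-exponent exceeds $3$ as soon as $q\geq2$; tuning the $\varepsilon$-power of the mesh cannot repair both defects simultaneously. The paper avoids exactly this by (a) proving the second-moment bound \emph{with the factor} $P(Y_1\in A)$ for each cell $A$ (Lemma \ref{momente2}), so that summing over all cells of any fixed partition costs only a constant rather than the cardinality of the grid, and (b) a multiscale chaining over the partitions $\mathcal{A}_{k_1,\ldots,k_p}$ of quality $\leq K$ built from the measures $\Lambda_j$, which are constructed so that they simultaneously dominate the increments of $F$ \emph{and} of the Hermite coefficients $J_{l_1,\ldots,l_p}$ (Lemma \ref{Lemma 3 mv}); with thresholds $\varepsilon/(l+4)^2$ and $K\asymp\log_2(\varepsilon^{-1}Nd_N^{-1})$ the whole $x$-uniformisation costs only a polylogarithmic factor $K^{5p}$, which is what makes $N^{-\kappa}\varepsilon^{-3}$ attainable for the full range $0<D<1/m$.

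Two secondary points. First, Arcones' lemma requires the row/column-sum quantity $\psi\leq1$, not merely decay of $\max_{i,j}|r^{(i,j)}(k)|$; the paper enforces this by splitting the time indices into $b$ congruence classes with $\psi(kb)\leq1$, and it applies the lemma to the whole remainder function (Hermite rank $m+1$) rather than termwise to pairs of Hermite polynomials, where a uniform constant is not available. Second, your Móricz-type maximal inequality in step (ii) is asserted rather than established; the paper instead proves the fixed-$n$ uniform bound of Lemma \ref{Dehling3.2}, whose specific structure $CN^{-\rho}\bigl((n/N)\varepsilon^{-3}+(n/N)^{2-mD}\bigr)$ is exactly what is needed to pass to $\max_{n\leq N}$ by the argument of \citet{Dehling}; it also needs the variance bound $E\bigl|\sum_{j=1}^nH_{l_1,\ldots,l_p}(X_j)\bigr|^2\leq Cd_n^2$ for every multi-index with $l_1+\ldots+l_p=m$ (Lemma \ref{asymptotischevar}) to control the subtracted rank-$m$ block, a point your sketch does not address.
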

\section{Proofs}\label{Beweise 1d}
We start by introducing suitable partitions of $\RR^p$. For $x\in\RR^p$ let 
\begin{equation}\label{Lambda}
 \Lambda(x):=\sum_{j=1}^p\Lambda_j(x^{(j)}),
\end{equation}
where $\Lambda_j:[-\infty,\infty]\rightarrow\RR$ are non-decreasing, right-continious functions satisfying $\Lambda_j(-\infty)=0$ and $\Lambda_j(\infty)=\Lambda_1(\infty)<\infty$ for all $j=1,\ldots,p$. These functions will be specified in section \ref{Proof 1 und 2} and \ref{Proof 3 und 4}, respectively.
For any $k\in\NN$, $1\leq i \leq 2^k-1$ and $1\leq j \leq p$ let
\begin{align}
 x_{i}^{(j)}(k):=&\inf\{x\in\RR:\Lambda_j(x)\geq\Lambda_1(\infty)i2^{-k}\},\notag\\
 x_{0}^{(j)}(k):=&-\infty,\notag\\
 x_{2^k}^{(j)}(k):=& \infty.\label{cp}
\end{align}
Furthermore, let $x_{0}^{(j)}(0)=-\infty$ and $x_{1}^{(j)}(0)=\infty$. Note that 
\begin{equation}\label{abstand}
 \Lambda_j(x_{i+1}^{(j)}(k)-)-\Lambda_j(x_{i}^{(j)}(k))\leq \Lambda_1(\infty)2^{-k}.
\end{equation}
Each partition will consist of disjoint boxes, whose vertices are described by the upper coordinates. To simplify the identification of such a partition, we will classify these into different \textit{qualities}. Note that the construction below is similar to the one used by \citet{Marinucci}.
\begin{Def}\label{Definition1}
i) We denote by $\mathcal{A}_{k_1,\ldots,k_p}$ the partition of $\RR^p$ whose elements have the form
\begin{equation*}
(x_{i_1}^{(1)}(k_1),x_{i_1+1}^{(1)}(k_1)]\times\ldots\times (x_{i_p}^{(p)}(k_p),x_{i_p+1}^{(p)}(k_p)],
\end{equation*}
$0\leq i_j\leq 2^{k_j-1}$.\\
ii) We call a partition $\mathcal{A}_{k_1,\ldots,k_p}$ a \textit{partition of quality} $k$, if $\max_{1\leq j\leq p}k_j=k$.
\end{Def}
The number of partitions of quality $k$ can be calculated as
\begin{align}
&\binom{p}{1}k^{p-1}+\binom{p}{2}k^{p-2}+\ldots+\binom{p}{p}k^{0}\notag\\
=&(k+1)^p-k^p.\label{anzk}
\end{align}
By \eqref{anzk}, the total number of all partitions of quality less or equal $K$ is given by
\begin{equation}
\sum_{k=1}^K (k+1)^p-k^p=(K+1)^p-1.
\end{equation}
For simplicity we denote these $(K+1)^p-1$ partitions by $\mathcal{A}_1,\ldots,\mathcal{A}_{(K+1)^p-1}$. For $x\in\RR^p$ let 
\begin{align*}
a_{x}(K)&:=(x_{i_K(x^{(1)})}^{(1)}(K),\ldots,x_{i_K(x^{(p)})}^{(p)}(K))\\
b_{x}(K)&:=(x_{i_K(x^{(1)})+1}^{(1)}(K),\ldots,x_{i_K(x^{(p)})+1}^{(p)}(K)),
\end{align*}
where for $1\leq k\leq K$ and $1\leq j\leq p$ $i_k(x^{(j)})$ denotes those index satisfying
\begin{equation}\label{schachtelung}
x_{i_k(x^{(j)})}^{(j)}(k)\leq x^{(j)}\leq x_{i_k(x^{(j)})+1}^{(j)}(k).
\end{equation}
\begin{Lemma}\label{Lemma 1}
For each $x\in\RR^p$ there exist disjoint sets $A_l(x)\in\mathcal{A}_l$ such that
\begin{equation}\label{ver}
\bigcup\limits_{1\leq l\leq (K+1)^p-1} A_l(x)=\{y\in\RR^p: y\leq a_{x}(K)\}.
\end{equation}
\end{Lemma}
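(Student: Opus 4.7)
The plan is to decompose each one-dimensional lower segment $(-\infty,x_{i_K(x^{(j)})}^{(j)}(K)]$ by a dyadic expansion and then take Cartesian products. The key ingredient, immediate from \eqref{cp}, is the nesting identity $x_{2i}^{(j)}(k)=x_i^{(j)}(k-1)$, which makes the cutpoints of quality $k$ refine those of quality $k-1$.

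First, for each coordinate $j$, I would write $i_K^{(j)}:=i_K(x^{(j)})$ in binary as $i_K^{(j)}=\sum_{k=1}^{K}b_k^{(j)}2^{K-k}$ and let $i_k^{(j)}:=\sum_{\ell=1}^{k}b_\ell^{(j)}2^{k-\ell}$, so that $i_k^{(j)}=2i_{k-1}^{(j)}+b_k^{(j)}$ and $i_0^{(j)}=0$. Define
\begin{equation*}
A_k^{(j)}:=\begin{cases}(x_{i_k^{(j)}-1}^{(j)}(k),\,x_{i_k^{(j)}}^{(j)}(k)]&\text{if }b_k^{(j)}=1,\\\emptyset&\text{if }b_k^{(j)}=0.\end{cases}
\end{equation*}
When nonempty, $A_k^{(j)}$ is an element of the one-dimensional quality-$k$ partition. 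A straightforward induction on $k$, using the nesting identity above, then gives
\begin{equation*}
\bigsqcup_{k=1}^{K}A_k^{(j)}=(-\infty,x_{i_K^{(j)}}^{(j)}(K)],
\end{equation*}
with disjointness built in.

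Next, I would glue the coordinates together by taking products. For each tuple $(k_1,\ldots,k_p)\in\{1,\ldots,K\}^p$, set
\begin{equation*}
A_{k_1,\ldots,k_p}(x):=A_{k_1}^{(1)}\times\cdots\times A_{k_p}^{(p)},
\end{equation*}
which is either empty or an element of $\mathcal{A}_{k_1,\ldots,k_p}$. For each of the remaining $(K+1)^p-1-K^p$ partitions — those $\mathcal{A}_{k_1,\ldots,k_p}$ with at least one $k_j=0$ and at least one $k_{j'}\geq 1$ — take the associated set to be empty. Expanding the Cartesian product of the coordinatewise decompositions yields
\begin{equation*}
\{y\leq a_x(K)\}=\prod_{j=1}^{p}(-\infty,x_{i_K^{(j)}}^{(j)}(K)]=\bigsqcup_{(k_1,\ldots,k_p)\in\{1,\ldots,K\}^p}A_{k_1,\ldots,k_p}(x),
\end{equation*}
and disjointness transfers coordinatewise.

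The only genuinely delicate step is the one-dimensional telescoping in the first part: one must verify that the right endpoint $x_{i_k^{(j)}}^{(j)}(k)$ of an active interval coincides with the left endpoint $x_{i_{k'}^{(j)}-1}^{(j)}(k')$ of the next active interval, even when several inactive levels $b_{k''}^{(j)}=0$ lie in between. Iterating the identity $x_{i_k^{(j)}}^{(j)}(k)=x_{2i_k^{(j)}}^{(j)}(k+1)=x_{i_{k+1}^{(j)}-b_{k+1}^{(j)}}^{(j)}(k+1)$ absorbs each inactive level, so the telescoping closes; everything else is essentially bookkeeping in the indices.
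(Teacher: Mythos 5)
Your proof is correct and takes essentially the same route as the paper: your binary-digit telescoping produces, coordinate by coordinate, exactly the intervals $(x^{(j)}_{i_{k-1}(x^{(j)})}(k-1),\,x^{(j)}_{i_{k}(x^{(j)})}(k)]$ that the paper writes down directly via the nested indices of \eqref{schachtelung}, and both arguments then distribute the Cartesian product over the coordinatewise unions, tacitly allowing empty boxes for the partitions $\mathcal{A}_{k_1,\ldots,k_p}$ with some $k_j=0$. The only difference is bookkeeping (explicit binary expansion and the nesting identity $x^{(j)}_{2i}(k)=x^{(j)}_i(k-1)$ versus the paper's direct use of the chain $x^{(j)}_{i_0(x^{(j)})}(0)\leq\ldots\leq x^{(j)}_{i_K(x^{(j)})}(K)$), so there is nothing to fix.
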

\begin{proof}
For $x^{(j)}$, $1\leq j\leq p$, choose $i_k(x^{(j)})$, $1\leq k\leq K$ as in \eqref{schachtelung}. This yields
\begin{equation*}
-\infty=x_{i_0(x^{(j)})}^{(j)}(0)\leq x_{i_1(x^{(j)})}^{(j)}(1)\leq\ldots\leq x_{i_K(x^{(j)})}^{(j)}(K)\leq x^{(j)}.
\end{equation*}
We define the sets $A_l(x)$  by
\begin{equation*}
\bigtimes_{j=1}^p (x_{i_{l_j}(x^{(j)})}^{(j)}(l_j),x_{i_{l_j+1}(x^{(j)})}^{(j)}(l_j+1)],~~~0\leq l_j\leq K-1.
\end{equation*}
These sets are disjoint since if there exist an element $y\in A_l(x)\cap A_k(x)$ one have 
$x_{i_{l_j}(x^{(j)})}^{(j)}(l_j)\leq y^{(j)}\leq x_{i_{l_j+1}(x^{(j)})}^{(j)}(l_j+1)$
and $x_{i_{k_j}(x^{(j)})}^{(j)}(k_j)\leq y^{(j)}\leq x_{i_{k_j+1}(x^{(j)})}^{(j)}(k_j+1)$. This implies $i_{l_j}(x^{(j)})=i_{k_j}(x^{(j)})$ and therefore $A_l(x)=A_k(x)$.
\end{proof}
\subsection{Proofs of Theorem 1 and 2}\label{Proof 1 und 2}
For any $x\in\RR^p$ and for any measurable $A\subset \RR^p$ we define 
\begin{align*}
 S_N(n,x)&:=d_N^{-1}\sum_{j=1}^n\left(1_{\{Y_j\leq x\}}-F(x)-\frac{J_m(x)}{m!}H_m(X_j)\right)\\
S_N(n,A)&:=d_N^{-1}\sum_{j=1}^n\left(1_{\{Y_j\in A\}}-P(Y_j\in A)-\frac{J_m(A)}{m!}H_m(X_j)\right).
\end{align*}
Furthermore, regarding \eqref{hc} we set
\begin{equation}
J_q(A):=E(1_{\{Y_j\in A\}}H_q(X_j))
\end{equation}
Since $J_m(A)+J_m(B)=J_m(A\cup B)$ for disjoint sets $A$ and $B$, we can use \eqref{ver} to get the following representation
\begin{equation}\label{darstellung}
S_N(n,x)=\sum_{l=1}^{(K+1)^p-1}S_N(n,A_l(x))+S_N(n,a_{x}(K),x).
\end{equation}
Lemma \ref{momente} will give us a second order moment bound for $S_N(n,A)$. It is due to Lemma 3.1. by \citet{Dehling}.
\begin{Lemma}\label{momente}
 There exist constants $\gamma>0$ and $C>0$ such that for all $A\subset\RR^p$, $n\leq N$
\begin{equation*}
 E\left|S_N(n,A)\right|^2\leq C\left(\frac{n}{N}\right)N^{-\gamma}P(Y_1\in A).
\end{equation*}
\end{Lemma}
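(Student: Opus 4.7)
The plan is to exploit the Hermite expansion of $1_A(Y)-P(Y\in A)$ together with the orthogonality of Hermite polynomials under the one-dimensional standard Gaussian measure. The key structural point is that the bound is really needed only for sets $A$ whose centered indicator has Hermite rank at least $m$; in the application these are the boxes $\bigtimes_{j=1}^p (a_j,b_j]$ arising in the partitions $\mathcal{A}_l$. For such a box, inclusion-exclusion writes $1_A-P(A)$ as a linear combination of centered half-space indicators $1_{\{Y\leq c\}}-F(c)$, each of Hermite rank at least $m$ by definition of $m$. Hence $J_q(A)=0$ for $q<m$ and, after subtracting the $H_m$-term,
\[
\psi_A(X_j):=1_{\{Y_j\in A\}}-P(Y_j\in A)-\frac{J_m(A)}{m!}H_m(X_j)=\sum_{q=m+1}^{\infty}\frac{J_q(A)}{q!}H_q(X_j).
\]

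Next, applying the orthogonality relation $E[H_p(X_j)H_q(X_k)]=\delta_{pq}\,q!\,r(j-k)^q$ gives
\[
E|S_N(n,A)|^2=d_N^{-2}\sum_{q=m+1}^{\infty}\frac{J_q(A)^2}{q!}\sum_{j,k=1}^n r(j-k)^q.
\]
Parseval's identity in $L^2(\varphi)$ yields $\sum_{q\geq 1}J_q(A)^2/q!=\operatorname{Var}(1_{\{Y_1\in A\}})\leq P(Y_1\in A)$, which is precisely where the factor $P(Y_1\in A)$ enters. For the covariance sum, since $r(k)\to 0$ there exists $K_0$ with $|r(k)|\leq 1/2$ for $|k|\geq K_0$, so $|r(k)|^q\leq |r(k)|^{m+1}$ uniformly in $q\geq m+1$ for such $k$. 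This yields the $q$-uniform estimate
\[
\sum_{j,k=1}^n |r(j-k)|^q\leq Cn\Bigl(1+\sum_{|l|\leq n}|r(l)|^{m+1}\Bigr)\leq Cn\max\!\bigl(1,\,n^{1-(m+1)D}L(n)^{m+1}\bigr).
\]

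Combining these ingredients with $d_N^2\sim N^{2-mD}L(N)^m$ and using $mD<1$ together with Potter-type bounds for the slowly varying function $L$, the overall prefactor reduces to $C(n/N)N^{-\gamma}$ for some $\gamma>0$. The main obstacle is the case distinction around $(m+1)D=1$: when $(m+1)D<1$ the covariance sum grows like $n^{2-(m+1)D}L(n)^{m+1}$ and the rate is essentially $\gamma\approx D$, while for $(m+1)D\geq 1$ the covariance sum is bounded and $\gamma\approx 1-mD$; in both regimes the slowly varying factors must be absorbed into an arbitrarily small exponent loss, which is routine but requires care to do uniformly in $n\leq N$.
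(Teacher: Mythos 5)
Your argument is correct and follows essentially the same route as the paper: Hermite expansion of the centered indicator, Parseval to extract the factor $P(Y_1\in A)$, the residual-rank-$(m+1)$ orthogonality bound $E|S_N(n,A)|^2\leq P(Y_1\in A)\,d_N^{-2}\sum_{j,k\leq n}|r(j-k)|^{m+1}$, and regular variation to reach $C(n/N)N^{-\gamma}$ (the paper simply delegates these last steps to Lemma 3.1 of Dehling and Taqqu). Two cosmetic remarks: since $|r(k)|\leq 1$ always, the $K_0$ device is unnecessary, and at the boundary $(m+1)D=1$ the sum $\sum_{|l|\leq n}|r(l)|^{m+1}$ is slowly varying (e.g.\ logarithmic) rather than bounded, which is harmless because it is absorbed into the $N^{-\gamma}$ factor, exactly as you indicate.
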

\begin{proof}
 Since the Hermite polynomials form an orthogonal basis of $L_2(\varphi(x)dx)$, the representation
\begin{equation*}
 1_{\{Y_j\in A\}}-P(Y_1\in A)=\sum_{q=1}^{\infty}\frac{J_q(A)}{q!}H_q(X_j)
\end{equation*}
yields 
\begin{align*}
 &\sum_{q=1}^{\infty}\frac{J_q^2(A)}{q!}\\
=&E\left(1_{\{Y_j\in A\}}-P(Y_1\in A)\right)^2\\
=&(1-P(Y_1\in A))^2P(Y_1\in A)+P(Y_1\in A)^2(1-P(Y_1\in A)))\\
\leq&P(Y_1\in A).
\end{align*}
Now along the lines of Lemma 3.1. by \citet{Dehling} we get
\begin{align*}
 E\left|S_N(n,A)\right|^2\leq&P(Y_1\in A)d_N^{-2}\sum_{j,k\leq n}|r(j-k)|^{m+1}\\
\leq&CP(Y_1\in A)\left(\frac{n}{N}\right)N^{mD-1\vee -D}L(n)L(N)^{-m},
\end{align*}
which completes the proof. 
\end{proof}
For $\Lambda(x)$ defined by \eqref{Lambda} let
\begin{equation*}
 \Lambda_j(x^{(j)}):=F_j(x^{(j)})+\int\limits_{\{G_j(s)\leq x^{(j)}\}}\frac{|H_m(s)|}{m!}\varphi(s)ds.
\end{equation*}
\begin{Lemma}\label{Lemma 3}
The increments $(m!)^{-1}J_m(x,y):=(m!)^{-1}(J_m(y)-J_m(x))$ and $F(x,y):=F(y)-F(x)$ are bounded by $\Lambda(y)-\Lambda(x)$ for all $x\leq y$.
\end{Lemma}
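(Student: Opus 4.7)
The plan is to reduce the $p$-dimensional increment to a union of one-dimensional increments along the coordinate axes and then apply a union bound. The starting observation is that for $x\leq y$ componentwise one has $\{Y_1\leq x\}\subseteq\{Y_1\leq y\}$, and in fact
\[
\{Y_1\leq y\}\setminus\{Y_1\leq x\}\subseteq\bigcup_{j=1}^{p}\{x^{(j)}<G_j(X_1)\leq y^{(j)}\},
\]
because any outcome in the difference must violate the $x$-bound in at least one coordinate while still satisfying the $y$-bound in every coordinate.

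For the distribution function increment I would write $F(y)-F(x)=E(1_{\{Y_1\leq y\}\setminus\{Y_1\leq x\}})$ and combine this inclusion with a union bound to obtain $F(y)-F(x)\leq\sum_{j=1}^{p}(F_j(y^{(j)})-F_j(x^{(j)}))$. Because the integral term in the definition of $\Lambda_j$ is nonnegative, this is bounded by $\sum_{j=1}^{p}(\Lambda_j(y^{(j)})-\Lambda_j(x^{(j)}))=\Lambda(y)-\Lambda(x)$.

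For the Hermite coefficient increment I would exploit the orthogonality $EH_m(X_1)=0$ for $m\geq 1$, which reduces $J_m(x)$ to $E(1_{\{Y_1\leq x\}}H_m(X_1))$ and gives $J_m(y)-J_m(x)=E(1_{\{Y_1\leq y\}\setminus\{Y_1\leq x\}}H_m(X_1))$. Passing to the absolute value inside the expectation, using the same set inclusion, and applying a union bound yields
\[
(m!)^{-1}|J_m(y)-J_m(x)|\leq\sum_{j=1}^{p}\int_{\{x^{(j)}<G_j(s)\leq y^{(j)}\}}\frac{|H_m(s)|}{m!}\varphi(s)\,ds,
\]
which is exactly the sum of the $|H_m|$-weighted integral pieces of $\Lambda_j(y^{(j)})-\Lambda_j(x^{(j)})$ and hence bounded by $\Lambda(y)-\Lambda(x)$.

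The main obstacle is essentially nonexistent: once one spots the coordinatewise decomposition of the difference set, both bounds fall out of the same union-bound argument. The function $\Lambda_j$ is evidently engineered as the sum of an $F_j$-piece and a $|H_m|$-weighted piece precisely so that the two increments are controlled in parallel, and the only point that needs care is to bring $|H_m(X_1)|$ inside the indicator \emph{before} applying the union bound, so that cancellations from the sign of $H_m$ do not have to be tracked.
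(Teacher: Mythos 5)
Your argument is correct and is essentially the paper's own proof: the paper likewise covers the difference set $\{G(s)\leq y,\,G(s)\not\leq x\}$ by the coordinatewise strips $\{x^{(j)}\leq G_j(s)\leq y^{(j)}\}$, pulls $|H_m|$ inside before summing over $j$, and then recognizes each one-dimensional piece as an increment of $\Lambda_j$. Your additional observations (dropping the $F(x)$ term via $EH_m(X_1)=0$ and bounding the absolute increment $|J_m(y)-J_m(x)|$) only restate or slightly strengthen what the paper does implicitly.
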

\begin{proof}
 Let $x\leq y$ and set $B:=\{s\in\RR:G(s)\leq y,G(s)\not\leq x\}$ and $B_j:=\{s\in\RR:x^{(j)}\leq G_j(s)\leq y^{(j)}\}$. Since $B\subset\bigcup\limits_{j=1}^pB_j$
we have
\begin{align*}
 J_m(y)-J_m(x)=&\int\limits_{\{G(s)\leq y\}}H_m(s)\varphi(s)ds-\int_{\{G(s)\leq x\}}H_m(s)\varphi(s)ds\\
                =&\int\limits_B H_m(s)\varphi(s)ds\\
             \leq&\int\limits_B |H_m(s)|\varphi(s)ds\\
             \leq&\sum_{j=1}^p\int\limits_{B_j} |H_m(s)|\varphi(s)ds\\
             \leq&\sum_{j=1}^p\biggl(\int\limits_{\{G_j(s)\leq y^{(j)}\}}|H_m(s)|\varphi(s)ds-\int_{\{G_j(s)\leq x^{(j)}\}}|H_m(s)|\varphi(s)ds\biggr)\\
             \leq&\sum_{j=1}^p(\Lambda_j(y^{(j)})-\Lambda_j(x^{(j)}))\\
                =&\Lambda(y)-\Lambda(x).
\end{align*}
Moreover,
\begin{align*}
 F(y)-F(x)\leq&\sum_{j=1}^p(F_j(y^{(j)})-F_j(x^{(j)}))\\
            \leq&\sum_{j=1}^p(\Lambda_j(y^{(j)})-\Lambda_j(x^{(j)}))\\
               =&\Lambda(y)-\Lambda(x).
\end{align*}
\end{proof}
\begin{Lemma}
 There exist constants $\rho, C>0$ such that for all $n\leq N$ and $0<\varepsilon\leq1$
\begin{equation*}
 P\left(\sup\limits_{x\in\RR^p}\left|S_N(n,x)\right|>\varepsilon\right)\leq CN^{-\rho}\left(\left(\frac{n}{N}\right)\varepsilon^{-3}
 +\left(\frac{n}{N}\right)^{2-mD}\right).
\end{equation*}
\end{Lemma}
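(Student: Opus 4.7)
My plan is to combine the partition decomposition~\eqref{darstellung} with the $L^{2}$-bound of Lemma~\ref{momente}. Fix $K\in\NN$ (to be chosen) and the function $\Lambda$ introduced just before Lemma~\ref{Lemma 3}, so that, by~\eqref{abstand} and Lemma~\ref{Lemma 3}, both $F$- and $(m!)^{-1}J_{m}$-increments on cells of a quality-$k$ partition are at most $\Lambda_{1}(\infty)2^{-k}$. For every $x\in\RR^{p}$, Lemma~\ref{Lemma 1} gives
\begin{equation*}
S_{N}(n,x)=\sum_{l=1}^{L}S_{N}(n,A_{l}(x))+S_{N}(n,a_{x}(K),x),\qquad L=(K+1)^{p}-1,
\end{equation*}
so each right-hand summand can be handled uniformly in $x$ separately.

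For the partition sum I would estimate
\begin{equation*}
\sup_{x\in\RR^{p}}\Bigl|\sum_{l=1}^{L}S_{N}(n,A_{l}(x))\Bigr|\le\sum_{l=1}^{L}\max_{A\in\mathcal{A}_{l}}|S_{N}(n,A)|,
\end{equation*}
and for each $l$ apply Markov, the union bound and Lemma~\ref{momente}, using $\sum_{A\in\mathcal{A}_{l}}P(Y_{1}\in A)=1$, to obtain
\begin{equation*}
P\Bigl(\max_{A\in\mathcal{A}_{l}}|S_{N}(n,A)|>\eta\Bigr)\le\frac{C\,(n/N)\,N^{-\gamma}}{\eta^{2}}.
\end{equation*}
Allocating $\varepsilon$ among the $L$ partitions with weights summable in $l$ and summing these tail bounds produces a contribution of order $C\,L^{3}N^{-\gamma}(n/N)\varepsilon^{-2}$ for the event that the partition sum exceeds $\varepsilon/2$.

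For the remainder, monotonicity of $y\mapsto 1_{\{Y_{j}\le y\}}$ together with Lemma~\ref{Lemma 3} yields
\begin{equation*}
|S_{N}(n,x)-S_{N}(n,a_{x}(K))|\le|S_{N}(n,(a_{x}(K),b_{x}(K)])|+\frac{C\,d_{N}^{-1}n}{2^{K}}+\frac{C}{2^{K}}\Bigl|d_{N}^{-1}\sum_{j=1}^{n}H_{m}(X_{j})\Bigr|.
\end{equation*}
The first summand is an $S_{N}$-value on a quality-$K$ cell and is absorbed by the argument of the previous paragraph after a further Markov step, which is precisely what produces the extra factor $\varepsilon^{-1}$ in the final $\varepsilon^{-3}$. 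The second, deterministic summand is $O(d_{N}^{-1}n\,2^{-K})$ and is made negligible by taking $2^{-K}\le N^{-\alpha}$ for some $\alpha>0$. The third summand is controlled via Chebyshev using the standard variance bound $E(d_{N}^{-1}\sum_{j=1}^{n}H_{m}(X_{j}))^{2}\le C(n/N)^{2-mD}$ (up to slow variation); with the above choice of $K$ it supplies precisely the second summand $(n/N)^{2-mD}$ of the target bound.

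Finally I balance parameters by fixing $K$ large enough that $2^{-K}\le N^{-\alpha}$, which keeps $L=(K+1)^{p}-1\lesssim(\log N)^{p}$, absorbs $L^{3}$ into a factor of the form $N^{-\gamma/2}$, and sets $\rho:=\min(\gamma/2,\alpha)/2$. The main obstacle is the careful bookkeeping in distributing $\varepsilon$ among the $L$ partition maxima and the three remainder terms so as to land on exactly $\varepsilon^{-3}$ rather than a larger negative power; the technical backbone is Lemma~\ref{Lemma 3}, which provides a single dominating function $\Lambda$ for both $F$- and $J_{m}$-moduli and thereby lets one box-partition scheme control the empirical and Hermite parts simultaneously.
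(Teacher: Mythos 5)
Your overall architecture is the same as the paper's: the decomposition \eqref{darstellung} via Lemma \ref{Lemma 1}, Chebyshev plus Lemma \ref{momente} for the partition terms with weights summable in $l$, and the monotonicity/$\Lambda$-argument of Lemma \ref{Lemma 3} for the remainder $S_N(n,a_x(K),x)$. The genuine gap is in your choice of $K$. You fix $2^{-K}\le N^{-\alpha}$ for ``some $\alpha>0$'', i.e.\ $K$ depends on $N$ only, and declare the deterministic drift term $2nd_N^{-1}p\Lambda_1(\infty)2^{-K}$ ``negligible''. Negligible must mean smaller than a fixed fraction of $\varepsilon$, uniformly over all $0<\varepsilon\le1$: the probability in \eqref{3.1} can only be split as you intend if, say, $2Nd_N^{-1}p\Lambda_1(\infty)2^{-K}\le\varepsilon/4$. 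Since $Nd_N^{-1}\asymp N^{mD/2}$ (up to slow variation), an $\varepsilon$-free choice of $K$ fails for small $\varepsilon$ no matter how large $\alpha$ is, unless you add a separate argument for that regime (e.g.\ that the claimed bound then trivially exceeds $1$, which needs $\alpha$ calibrated against $\rho$ and $mD$ — and in any case $\alpha>mD/2$, not just $\alpha>0$). The paper resolves this by letting $K$ depend on $\varepsilon$, namely $K=\lceil\log_2(8p\Lambda_1(\infty)\varepsilon^{-1}Nd_N^{-1})\rceil$, which simultaneously kills the drift term and lets the $\varepsilon$-dependence cancel the $\varepsilon^{-2}$ in the Chebyshev bound for the Hermite sum, producing the clean $(n/N)^{2-mD}N^{-mD+\lambda}$ term in \eqref{3.4}.

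This also corrects your accounting of the exponent $3$: it does not come from ``a further Markov step'' on the quality-$K$ cell (that cell is handled exactly like the other partition terms and only costs $\varepsilon^{-2}$), but from the fact that with the $\varepsilon$-dependent $K$ one has $K^{5p}\le C\bigl(\log(\varepsilon^{-1})^{5p}+\log(N)^{5p}\bigr)\le C\varepsilon^{-1}N^{\delta}$, so the number of cells contributes the extra $\varepsilon^{-1}$ on top of the Chebyshev $\varepsilon^{-2}$. Two smaller points: your Hermite-sum estimate retains a factor $\varepsilon^{-2}$ in your setup (it is absorbable into the $(n/N)\varepsilon^{-3}$ term since $2-mD>1$, but it is not ``precisely the second summand'' as you claim); and in the remainder you should work with the left limit $b_x(K)-$ as in \eqref{eps/2}, since \eqref{abstand} only controls $\Lambda$-increments up to $\Lambda_j(x_{i+1}^{(j)}(K)-)$, so a closed box $(a_x(K),b_x(K)]$ could pick up an atom at the chaining point and violate the $2^{-K}$ bound.
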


\begin{proof}
 Since we want to use representation \eqref{darstellung}, we will start by bounding $|S_N(n,a_{x}(K),x)|$.
\begin{align*}
 &|S_N(n,a_{x}(K),x)|\\
=&\Biggl|d_N^{-1}\sum_{j=1}^n\Bigl(\bigl(1_{\{Y_j\leq x\}}-1_{\{Y_j\leq
a_{x}(K)\}}\bigr)-F(a_{x}(K),x)-\frac{1}{m!}J_m(a_{x}(K),x)H_m(X_j)\Bigr)\Biggr|\\
\leq&d_N^{-1}\sum_{j=1}^n\Bigl(\bigl(1_{\{Y_j< b_{x}(K)\}}-1_{\{Y_j\leq a_{x}(K)\}}\bigr)+F(a_{x}(K),b_{x}(K)-)\Bigr)\\
&~~~~+\frac{1}{m!}J_m(a_{x}(K),b_{x}(K)-)d_N^{-1}\Bigl|\sum_{j=1}^nH_m(X_j)\Bigr|\\
\leq&|S_N(n,a_{x}(K),b_{x}(K)-)|+2nd_N^{-1}F(a_{x}(K),b_{x}(K)-)\\
&~~~~+\frac{2}{m!}d_N^{-1}J_m(a_{x}(K),b_{x}(K)-)\Bigl|\sum_{j=1}^nH_m(X_j)\Bigr|\\
\leq&|S_N(n,a_{x}(K),b_{x}(K)-)|+2nd_N^{-1}\Lambda(a_{x}(K),b_{x}(K)-)\\
&~~~~+2d_N^{-1}\Lambda(a_{x}(K),b_{x}(K)-)\Bigl|\sum_{j=1}^nH_m(X_j)\Bigr|\\
\leq&|S_N(n,a_{x}(K),b_{x}(K)-)|+2nd_N^{-1}p\Lambda_1(\infty)2^{-K}+2d_N^{-1}p\Lambda_1(\infty)2^{-K}\Bigl|\sum_{j=1}^nH_m(X_j)\Bigr|\numberthis\label{eps/2}
\end{align*}
By using \eqref{darstellung}, \eqref{eps/2} and $\sum_{l=1}^{\infty}\varepsilon/(l+4)^2<\varepsilon/4$ we get
\begin{align}
 &P\left(\sup\limits_{x\in\RR^p}\left|S_N(n,x)\right|>\varepsilon\right)\notag\\
\leq\sum_{l=1}^{(K+1)^p-1}&P\left(\max\limits_{x\in\RR^p}\left|S_N(n,A_l(x))\right|>\varepsilon/(l+4)^2\right)\notag\\
+&P\left(\sup\limits_{x\in\RR^p}\left|S_N(n,a_{x}(K),b_{x}(K)-)\right|>\varepsilon/4\right)\notag\\
+&P\biggl(2d_N^{-1}p\Lambda_1(\infty)2^{-K}\Bigl|\sum_{j=1}^nH_m(X_j)\Bigr|>\varepsilon/2-2nd_N^{-1}p\Lambda_1(\infty)2^{-K}\biggr).\label{3.1}
\end{align}
Remember, the elements $A_l(1),\ldots A_l(|\mathcal{A}_l|)$ of $\mathcal{A}_l$ are disjoint. Therefore Lemma \ref{momente} yields
\begin{align*}
 &P\left(\max\limits_{x\in\RR^p}\left|S_N(n,A_l(x))\right|>\varepsilon/(l+4)^2\right)\\
\leq&\sum_{i=1}^{|\mathcal{A}_l|}P\left(\left|S_N(n,A_l(i))\right|>\varepsilon/(l+4)^2\right)\\
\leq&\sum_{i=1}^{|\mathcal{A}_l|}(l+4)^4\varepsilon^{-2}E\left|S_N(n,A_l(i))\right|^2\\
\leq&C\left(\frac{n}{N}\right)N^{-\gamma}(l+4)^4\varepsilon^{-2}\sum_{i=1}^{|\mathcal{A}_l|}P(Y_1\in A_l(i))\\
=&C\left(\frac{n}{N}\right)N^{-\gamma}(l+4)^4\varepsilon^{-2},\numberthis\label{3.2}
\end{align*}
for $1\leq l\leq (K+1)^p-1$. The next-to-last summand can be bounded as follows. Similar to \eqref{ver} and \eqref{darstellung} each partition of quality $K$ contains one element $B_l(x)$ such that 
\begin{equation*}
S_N(n,a_{x}(K),b_{x}(K)-)=\sum_{l=1}^{(K+1)^p-K^p} S_N(n,B_l(x)-).
\end{equation*}
With respect to \eqref{3.2} we get
\begin{align*}
 &P\left(\sup\limits_{x\in\RR^p}\left|S_N(n,a_{x}(K),b_{x}(K)-)\right|>\varepsilon/4\right)\\
\leq&\sum_{l=1}^{(K+1)^p-K^p}P\left(\max\limits_{x\in\RR^p}\left|S_N(n,B_l(x)-)\right|>\varepsilon/(4(l+4)^2)\right)\\
\leq&C\left(\frac{n}{N}\right)N^{-\gamma}\varepsilon^{-2}\sum_{l=1}^{(K+1)^p-K^p}(l+4)^4.\numberthis\label{3.3}
\end{align*}
Now let
\begin{equation*}
 K=\left\lceil\log_2\left(\frac{8p\Lambda_1(\infty)}{\varepsilon}Nd_N^{-1}\right)\right\rceil.
\end{equation*}
This choice implies
\begin{align*}
 &2Nd_N^{-1}p\Lambda_1(\infty)2^{-K}\leq\frac{\varepsilon}{4}\\
 &\left(\frac{\varepsilon}{4}\right)^{-2}\leq N^{-2}d_N^2(p\Lambda_1(\infty))^{-2}2^{2K-2}
\end{align*}
and therefore
\begin{align}
 &P\biggl(2d_N^{-1}p\Lambda_1(\infty)2^{-K}\Bigl|\sum_{j=1}^nH_m(X_j)\Bigr|>\frac{\varepsilon}{2}-2nd_N^{-1}p\Lambda_1(\infty)2^{-K}\biggr)\notag\\
\leq&P\biggl(2d_N^{-1}p\Lambda_1(\infty)2^{-K}\Bigl|\sum_{j=1}^nH_m(X_j)\Bigr|>\frac{\varepsilon}{4}\biggr)\notag\\
\leq&P\biggl(d_N^{-1}\Bigl|\sum_{j=1}^nH_m(X_j)\Bigr|>\frac{\varepsilon}{4}\cdot\frac{2^{K-1}}{p\Lambda_1(\infty)}\biggr)\notag\\
\leq&d_N^{-2}E\Bigl|\sum_{j=1}^nH_m(X_j)\Bigr|^2\left(\frac{\varepsilon}{4}\right)^{-2}2^{-2K+2}(p\Lambda_1(\infty))^2\notag\\
\leq&C\left(\frac{d_n}{d_N}\right)^2N^{-2}d_N^2\notag\\
\leq&C\left(\frac{n}{N}\right)^{2-mD}\left(\frac{L(n)}{L(N)}\right)^mN^{-mD}L^m(N)\notag\\
\leq&C\left(\frac{n}{N}\right)^{2-mD}N^{-mD+\lambda},\label{3.4}
\end{align}
for any $\lambda>0$. By using \eqref{3.1}, \eqref{3.2}, \eqref{3.3}, \eqref{3.4} we get 
\begin{align*}
 &P\left(\sup\limits_{x\in\RR^p}\left|S_N(n,x)\right|>\varepsilon\right)\\
\leq&C\left(\frac{n}{N}\right)N^{-\gamma}\varepsilon^{-2}\left(\sum_{l=1}^{(K+1)^p-1}(l+4)^4+\sum_{l=1}^{(K+1)^p-K^p}(l+4)^4\right)
 +C\left(\frac{n}{N}\right)^{2-mD}N^{-mD+\lambda}\\
\leq&C\left(\frac{n}{N}\right)N^{-\gamma}\varepsilon^{-2}K^{5p}+C\left(\frac{n}{N}\right)^{2-mD}N^{-mD+\lambda}\numberthis\label{vorletzte}
\end{align*}
Since 
\begin{align*}
K^{5p}\leq&C\left(\log(\varepsilon^{-1})^{5p}+\log(N)^{5p}\right)\\
           \leq&C\varepsilon^{-1}N^{\delta}
\end{align*}
for any $\delta>0$, \eqref{vorletzte} is bounded by
 \begin{equation*}
 CN^{(-\gamma+\delta)\vee(-mD+\lambda)}\left(\left(\frac{n}{N}\right)\varepsilon^{-3} +\left(\frac{n}{N}\right)^{2-mD}\right),
 \end{equation*}
 which completes the proof.
\end{proof}
To prove Theorem 1 and 2 we can use the proofs which were given by \citet{Dehling} in the case of one dimensional observations.
\subsection{Proofs of Theorem 3 and 4}\label{Proof 3 und 4}
For simplicity we assume that $p=q$, i.e. $G:\RR^p\rightarrow\RR^p$. Since the main idea for proving Theorem \ref{reduction mv} was already used in the proof of Theorem \ref{rp}. Therefore, we will use some modified definitions and notations from section \ref{Proof 1 und 2}. From now on let
\begin{align*}
 S_N(n,x)&:=d_N^{-1}\sum_{j=1}^n\left(1_{\{Y_j\leq x\}}-F(x)-\sum_{l_1+\ldots+l_p=m}\frac{J_{l_1,\ldots,l_p}(x)}{l_1!\cdots l_p!}H_{l_1,\ldots,l_p}(X_j)\right),\\
 S_N(n,A)&:=d_N^{-1}\sum_{j=1}^n\left(1_{\{Y_j\in A\}}-P(Y_j\in A)-\sum_{l_1+\ldots+l_p=m}\frac{J_{l_1,\ldots,l_p}(A)}{l_1!\cdots l_p!}H_{l_1,\ldots,l_p}(X_j)\right),\\
 J_{l_1,\ldots,l_p}(A)&:=E(1_{\{Y_j\in A\}}H_{l_1,\ldots,l_p}(X_j)).
\end{align*}
For $\Lambda(x)$ defined in \eqref{Lambda} replace the terms of the sum by
\begin{equation*}
 \Lambda_j(x^{(j)}):=P(G_j(X_1)\leq x^{(j)})+\sum_{l_1+\ldots+l_p=m}\int\limits_{\substack{\{s\in\RR^p:\\G_j(s)\leq x^{(j)}\}}}\frac{|H_{l_1,\ldots,l_p}(s)|}{l_1!\cdots l_p!}\varphi(s)ds^{(1)}\ldots ds^{(p)},
\end{equation*}
where $\varphi$ denotes the $p$-dimensional standard normal distribution, and define the chaining points $x_i^{(j)}(k)$ analogous to \eqref{cp}. The partitions $\mathcal{A}_{k_1,\ldots,k_p}$ are given as in \defref{Definition1}. Therefore, Lemma \ref{Lemma 1} and representation \eqref{darstellung} still hold, i.e.
\begin{equation}\label{darstellung2}
S_N(n,x)=\sum_{l=1}^{(K+1)^p-1}S_N(n,A_l(x))+S_N(n,a_{x}(K),x).
\end{equation}
\begin{Lemma}\label{Lemma 3 mv}
The increments $J_m(x,y)$ and $F(x,y)$, where
\begin{align*}
J_m(x,y):=&\sum_{l_1+\ldots+l_p=m}\frac{J_{l_1,\ldots,l_p}(y)-J_{l_1,\ldots,l_p}(x)}{l_1!\cdots l_p!}\\
F(x,y):=&F(y)-F(x),
\end{align*}
are both bounded by $\Lambda(x,y):=\Lambda(y)-\Lambda(x)$ for all $x\leq y$.
\end{Lemma}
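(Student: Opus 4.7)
The plan is to mimic the proof of Lemma \ref{Lemma 3} verbatim, with the scalar Hermite polynomial $H_m$ and its factorial weight replaced by the finite sum $\sum_{l_1+\ldots+l_p=m} H_{l_1,\ldots,l_p}/(l_1!\cdots l_p!)$. The revised definition of $\Lambda_j$ at the beginning of Section \ref{Proof 3 und 4} was designed precisely to absorb exactly this sum of absolute values, so no genuinely new estimate is required.

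Fix $x\le y$ and, in analogy with the scalar case, introduce the sets
\[
 B:=\{s\in\RR^p:G(s)\leq y,\, G(s)\not\leq x\},\qquad
 B_j:=\{s\in\RR^p: x^{(j)}< G_j(s)\leq y^{(j)}\}.
\]
The componentwise characterization of $\leq$ on $\RR^p$ yields $B\subset\bigcup_{j=1}^p B_j$: any $s\in B$ satisfies $G_j(s)\leq y^{(j)}$ for all $j$ and $G_{j_0}(s)>x^{(j_0)}$ for at least one $j_0$.

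For the Hermite-coefficient bound, one writes, for each multi-index with $l_1+\ldots+l_p=m$,
\[
 J_{l_1,\ldots,l_p}(y)-J_{l_1,\ldots,l_p}(x)=\int_B H_{l_1,\ldots,l_p}(s)\varphi(s)\,ds,
\]
then sums over such multi-indices with the weights $1/(l_1!\cdots l_p!)$, pulls the sum inside the integral, bounds by absolute values, and splits $B$ along the covering $\bigcup_j B_j$. This gives
\[
 J_m(x,y)\leq\sum_{j=1}^p\sum_{l_1+\ldots+l_p=m}\frac{1}{l_1!\cdots l_p!}\int_{B_j}|H_{l_1,\ldots,l_p}(s)|\varphi(s)\,ds.
\]
Writing the $B_j$ integral as the difference of integrals over $\{G_j\leq y^{(j)}\}$ and $\{G_j\leq x^{(j)}\}$ and recognizing the revised definition of $\Lambda_j$, the inner sum equals $\Lambda_j(y^{(j)})-\Lambda_j(x^{(j)})-(F_j(y^{(j)})-F_j(x^{(j)}))\leq\Lambda_j(y^{(j)})-\Lambda_j(x^{(j)})$. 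Summing over $j$ produces the desired bound $\Lambda(y)-\Lambda(x)=\Lambda(x,y)$.

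For the distribution-function part, $\{Y_1\leq y\}\setminus\{Y_1\leq x\}\subset\bigcup_{j=1}^p\{x^{(j)}<G_j(X_1)\leq y^{(j)}\}$ yields
\[
 F(x,y)\leq\sum_{j=1}^p(F_j(y^{(j)})-F_j(x^{(j)}))\leq\sum_{j=1}^p(\Lambda_j(y^{(j)})-\Lambda_j(x^{(j)}))=\Lambda(x,y),
\]
where the second inequality uses that $\Lambda_j$ contains $F_j$ as a summand and the remaining Hermite-integral term is non-negative. The argument presents no real obstacle; the only point to verify carefully is that the modified $\Lambda_j$ indeed dominates the full sum over multi-indices of total degree $m$, which it does by construction.
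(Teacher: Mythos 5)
Your proposal is correct and follows exactly the route the paper intends: the paper simply notes that Lemma \ref{Lemma 3 mv} is proved as Lemma \ref{Lemma 3}, i.e.\ via the covering $B\subset\bigcup_{j=1}^p B_j$ and the modified $\Lambda_j$, which is precisely your argument with the finite sum over multi-indices carried through the same inequalities. No gap; the steps (pulling the finite sum inside, using monotonicity of the absolute-value Hermite integrals, and discarding the non-negative $F_j$ increments) are all valid.
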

Lemma \ref{Lemma 3 mv} can be proven in the same way as Lemma \ref{Lemma 3}. Lemma \ref{LemmaArcones} is due to \citet[Lemma 1]{Arcones}.
\begin{Lemma}\label{LemmaArcones}
Let $X=(X^{(1)},\ldots,X^{(p)})$ and $Y=(Y^{(1)},\ldots,Y^{(p)})$ be two mean-zero Gaussian random vectors on $\RR^p$. Assume that
\begin{equation}\label{a4}
EX^{(i)}X^{(j)}=EY^{(i)}Y^{(j)}=\delta_{i,j}
\end{equation}
for each $1\leq i,j\leq p$. We define 
\begin{equation*}
r^{(i,j)}:=EX^{(i)}Y^{(j)}.
\end{equation*}
Let $f$ be a function on $\RR^p$ with finite second moment and Hermite rank $m$, $1\leq m<\infty$, with respect to $X$. Suppose that
\begin{equation*}
\psi:=\left(\sup_{1\leq i\leq p}\sum_{j=1}^p|r^{(i,j)}|\right)\vee\left(\sup_{1\leq j\leq p}\sum_{i=1}^p|r^{(i,j)}|\right)\leq 1.
\end{equation*}
Then 
\begin{equation*}
\left|E(f(X)-Ef(X))(f(Y)-Ef(Y))\right|\leq\psi^mEf(X)^2.
\end{equation*}
\end{Lemma}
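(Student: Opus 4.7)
My plan is to combine a multivariate Hermite expansion of $f$ with the diagram formula for joint expectations of Hermite polynomials under a jointly Gaussian law. Writing multi-indices $l=(l_1,\ldots,l_p)$, $|l|=l_1+\cdots+l_p$, $l!=l_1!\cdots l_p!$, and $H_l(x)=\prod_i H_{l_i}(x^{(i)})$, the assumption that $X$ is a standard Gaussian vector with independent coordinates yields the $L^2$-expansion $f(X)-Ef(X)=\sum_{|l|\geq m}(l!)^{-1}c_l H_l(X)$ with $c_l=E[f(X)H_l(X)]$, and by Parseval $\sum_{|l|\geq m}c_l^2/l! = E(f(X)-Ef(X))^2\leq Ef(X)^2$. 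Bilinearity then gives
\[ E(f(X)-Ef(X))(f(Y)-Ef(Y)) = \sum_{|l|,|l'|\geq m}\frac{c_l c_{l'}}{l!\,l'!}\,E[H_l(X)H_{l'}(Y)]. \]

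Next I would invoke the diagram formula for the joint Hermite expectation: since the coordinates of $X$ are i.i.d.\ $N(0,1)$ and likewise for $Y$, while $EX^{(i)}Y^{(j)}=r^{(i,j)}$, the expectation vanishes unless $|l|=|l'|$ and otherwise equals $\sum_M (l!\,l'!/\prod_{i,j} M_{ij}!)\prod_{i,j}(r^{(i,j)})^{M_{ij}}$, summed over nonnegative integer matrices $M=(M_{ij})$ with row sums $l_i$ and column sums $l'_j$. Every diagram satisfies $\sum_{i,j}M_{ij}=|l|\geq m$, and the bound $|r^{(i,j)}|\leq\psi\leq 1$ together with the row- and column-sum constraints lets one factor $\psi^m$ out of each diagram cleanly. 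Substituting into the bilinear sum and rearranging, the remaining combinatorial weights can be collapsed by Cauchy-Schwarz in $(l,l')$ back onto $\sum_{|l|\geq m} c_l^2/l!$, producing the claimed bound $\psi^m Ef(X)^2$.

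The main obstacle is the bookkeeping in the last step: after extracting $\psi^m$, the residual diagram sum still depends on $l$ and $l'$, and one must estimate it symmetrically so that Cauchy-Schwarz matches Parseval without loss. A cleaner alternative I would fall back on is operator-theoretic. Since $\|R\|_{\mathrm{op}}^2\leq \|R\|_\infty\|R\|_1\leq\psi^2\leq 1$, one may represent $Y$ in distribution as $R^\top X + W$ with $W$ Gaussian and independent of $X$; introducing the conditional-expectation operator $Tg(x):=E[g(R^\top x+W)]$ on the standard Gaussian $L^2$-space on $\RR^p$, one checks via the generating-function identity $\sum_l (Tg_l)(x)\,t^l/l!=\exp((Rt)\cdot x-\tfrac12\|Rt\|^2)$ (with $g_l=H_l$) that $T$ maps the $k$-th Wiener chaos into itself and acts there as a symmetric tensor power of $R$, hence with operator norm at most $\|R\|_{\mathrm{op}}^k\leq\psi^k$; Cauchy-Schwarz applied to $\langle f, Tf\rangle$ in $L^2$ then finishes the proof.
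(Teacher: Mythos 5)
The paper does not prove this lemma at all: it is quoted verbatim from \citet[Lemma 1]{Arcones}, so the only comparison available is with Arcones' original argument, which is the combinatorial one you sketch first — Hermite expansion plus the diagram formula for $E[H_l(X)H_{l'}(Y)]$, followed by a delicate rearrangement that recombines the diagram weights with the coefficients $c_l$. Your first route therefore mirrors the original proof but stops exactly where its real work begins (you correctly identify the residual $l,l'$-dependent diagram sum as the obstacle and do not resolve it), so on its own it would not count as a proof. Your fallback, however, is complete and correct, and genuinely different: joint Gaussianity (implicit in the lemma, and what holds in the application $X=X_j$, $Y=X_{j+k}$) gives $Y\overset{d}{=}R^{\top}X+W$ with $W$ independent of $X$ and covariance $I-R^{\top}R\succeq0$, since the Schur bound $\|R\|_{\mathrm{op}}\leq(\|R\|_{1}\|R\|_{\infty})^{1/2}\leq\psi\leq1$; the Mehler-type operator $Tg(x)=E\,g(R^{\top}x+W)$ satisfies $E(f(X)-Ef(X))(f(Y)-Ef(Y))=\langle f-Ef,\,T(f-Ef)\rangle_{L^2(\gamma_p)}$, and your generating-function computation shows $T$ preserves each Wiener chaos and acts on the $k$-th chaos as a symmetric tensor power of $R$, hence with norm at most $\|R\|_{\mathrm{op}}^{k}\leq\psi^{k}$; since the Hermite rank condition puts $f-Ef$ in $\bigoplus_{k\geq m}$ chaoses, Cauchy--Schwarz gives the bound $\psi^{m}\|f-Ef\|^{2}\leq\psi^{m}Ef(X)^{2}$. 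What this buys is a transparent, bookkeeping-free proof resting on the standard second-quantization fact that $\Gamma(R)$ contracts each chaos by $\|R\|_{\mathrm{op}}^{k}$, at the price of invoking that operator-theoretic machinery (and the operator-norm control $\|R\|_{\mathrm{op}}\leq\psi$, which is where the hypothesis on row and column sums enters), whereas Arcones' route is elementary but combinatorially heavier.
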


\begin{Lemma}\label{momente2}
There exist constants $\gamma>0$ and $C>0$ such that for all measurable $A\subset\RR^p$ and $n\leq N$
\begin{equation*}
E|S_N(n,A)|^2\leq C\left(\frac{n}{N}\right)N^{-\gamma}P(Y_1\in A).
\end{equation*}
\end{Lemma}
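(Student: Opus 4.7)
The plan is to mimic the one-dimensional argument of Lemma \ref{momente}, replacing the scalar Hermite covariance identity by the covariance bound of Lemma \ref{LemmaArcones}. Abbreviate
\begin{equation*}
g_A(X_j):=1_{\{Y_j\in A\}}-P(Y_1\in A)-\sum_{l_1+\cdots+l_p=m}\frac{J_{l_1,\ldots,l_p}(A)}{l_1!\cdots l_p!}H_{l_1,\ldots,l_p}(X_j),
\end{equation*}
so that $E|S_N(n,A)|^2=d_N^{-2}\sum_{j,k=1}^n E[g_A(X_j)g_A(X_k)]$. The multivariate Hermite expansion \eqref{multidarstellung} of $g_A(X_1)$ contains only polynomials of total degree strictly greater than $m$; in particular $g_A$ has Hermite rank at least $m+1$ with respect to $X_1$, and Parseval's identity applied to \eqref{multidarstellung} gives
\begin{equation*}
E\,g_A(X_1)^2\leq E\bigl(1_{\{Y_1\in A\}}-P(Y_1\in A)\bigr)^2\leq P(Y_1\in A).
\end{equation*}

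Next, define
\begin{equation*}
\psi(h):=\Bigl(\sup_{1\leq i\leq p}\sum_{l=1}^p|r^{(i,l)}(h)|\Bigr)\vee\Bigl(\sup_{1\leq l\leq p}\sum_{i=1}^p|r^{(i,l)}(h)|\Bigr),
\end{equation*}
which by \eqref{a3} satisfies $\psi(h)\leq CL(h)h^{-D}$, so there is an $h_0$ with $\psi(h)\leq 1$ for all $h\geq h_0$. For such lags Lemma \ref{LemmaArcones}, applied to the pair $(X_j,X_k)$ and the Hermite-rank-$(\geq m+1)$ function $g_A$, yields
\begin{equation*}
|E[g_A(X_j)g_A(X_k)]|\leq\psi(|j-k|)^{m+1}E\,g_A(X_1)^2\leq P(Y_1\in A)\,\psi(|j-k|)^{m+1}.
\end{equation*}
For the $O(nh_0)$ pairs with $|j-k|<h_0$ the trivial Cauchy--Schwarz bound $|E[g_A(X_j)g_A(X_k)]|\leq E\,g_A(X_1)^2\leq P(Y_1\in A)$ suffices.

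Finally, a Karamata-type computation shows $\sum_{j,k\leq n}\psi(|j-k|)^{m+1}\leq C n^{1\vee(2-(m+1)D)}L(n)^{m+1}$, and combining with the Taqqu--Dobrushin rate $d_N^2\sim N^{2-mD}L(N)^m$ one obtains
\begin{equation*}
d_N^{-2}\sum_{j,k\leq n}|E[g_A(X_j)g_A(X_k)]|\leq C\,P(Y_1\in A)\,\frac{n}{N}\,N^{-\gamma_0}\frac{L(n)^{m+1}}{L(N)^m},
\end{equation*}
with $\gamma_0:=\min(D,1-mD)>0$; absorbing the slowly varying prefactor into $N^{\lambda}$ for arbitrarily small $\lambda>0$ yields the claim for any $\gamma<\gamma_0$. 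The main delicate point is verifying that $g_A$ truly has Hermite rank at least $m+1$ for the rectangular sets $A$ that appear later in the chaining. For such $A$, every coefficient $J_{l_1,\ldots,l_p}(A)$ with $l_1+\cdots+l_p<m$ is, by inclusion--exclusion over the corners of the rectangle, a signed sum of coefficients of lower-box indicators, all of which vanish by the definition of the Hermite rank $m$ of $(1_{\{G(\cdot)\leq x\}}-F(x))_{x\in\RR^q}$.
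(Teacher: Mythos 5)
Your proof is correct and rests on the same three pillars as the paper's own argument: the Parseval bound $E g_A(X_1)^2\le P(Y_1\in A)$ coming from the multivariate Hermite expansion \eqref{multidarstellung}, Arcones' covariance inequality (Lemma \ref{LemmaArcones}) applied to the rank-$(\ge m+1)$ function $g_A$, and the Dehling--Taqqu-type summation of $\psi(|j-k|)^{m+1}$ against $d_N^{-2}$. The one technical point you treat differently is the hypothesis $\psi\le1$ of Lemma \ref{LemmaArcones}: you apply the lemma pairwise only for lags $|j-k|\ge h_0$ and dispose of the $O(nh_0)$ near-diagonal pairs by Cauchy--Schwarz, whereas the paper follows Arcones and splits $\sum_{j\le n}f(X_j)$ into $b$ interleaved subsequences of spacing $b$ chosen so that $\psi(kb)\le1$; both devices are equally valid and give the same rate $\gamma$ of order $\min(D,1-mD)$ up to slowly varying corrections. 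You moreover justify, via inclusion--exclusion over the corners of a rectangle, why $g_A$ has Hermite rank at least $m+1$ for the boxes actually used in the chaining, a claim the paper merely asserts; this also pinpoints that the lemma, although stated for all measurable $A$, is really proved (in both versions) only for sets whose low-order coefficients $J_{l_1,\ldots,l_p}(A)$, $l_1+\cdots+l_p<m$, vanish. A cosmetic caveat: your intermediate estimate $\sum_{j,k\le n}\psi(|j-k|)^{m+1}\le Cn^{1\vee(2-(m+1)D)}L(n)^{m+1}$ should include an extra additive term of order $n$ (from the near-diagonal pairs, and when $(m+1)D\ge1$ from the convergent tail sum, with a logarithmic factor at $(m+1)D=1$), since $L(n)^{m+1}$ may tend to zero; these terms are harmless after division by $d_N^2\approx N^{2-mD}L(N)^m$, so the final bound and the value of $\gamma$ are unaffected.
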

\begin{proof}
Let
\begin{align*}
f(\cdot):=&1_{\{G(\cdot)\in A\}}-P(Y_1\in A)-\sum_{l_1+\ldots+l_p=m}\frac{J_{l_1,\ldots,l_p}(A)}{l_1!\cdots l_p!}H_{l_1,\ldots,l_p}(\cdot)\\
\psi(k):=&\left(\max_{1\leq i\leq p}\sum_{j=1}^p|r^{(i,j)}(k)|\right)\vee\left(\max_{1\leq j\leq p}\sum_{i=1}^p|r^{(i,j)}(k)|\right).
\end{align*}
Since $r^{(i,j)}(k)$ converges to $0$ for all $1\leq i,j\leq p$ if $k$ tends to infinity, we can find $b\in\NN$ such that $\psi(kb)\leq 1$ for all $k\geq 1$.  Note that $f$ has Hermite rank $m+1$. Therefore, as \citet[p. 2249]{Arcones} we can apply Lemma  \ref{LemmaArcones} as follows
\begin{align*}
&E\left|\sum_{j=1}^nf(X_j)\right|^2\\
=&E\left|\sum_{j=1}^b\sum_{k=1}^{\lfloor n-j+b/b\rfloor}f(X_{(k-1)b+j})\right|^2\\
\leq&C\sum_{j=1}^b\sum_{k,l=1}^{\lfloor n-j+b/b\rfloor}E\left(f(X_{(k-1)b+j})f(X_{(l-1)b+j})\right)\\
\leq&C\sum_{j=1}^b\sum_{k,l=1}^{\lfloor n-j+b/b\rfloor}\psi(b(k-l))^{m+1}Ef(X_1)^2\\
\leq&CEf(X_1)^2\sum_{k,l=1}^{n}\psi(b(k-l))^{m+1}\\
\end{align*}
By using
\begin{align*}
&Ef(X_1)^2\\
=&\sum_{\substack{k_1+\ldots+k_p\geq m+1\\l_1+\ldots+l_p\geq m+1}}J_{k_1,\ldots,k_p}J_{l_1,\ldots,l_p}\prod_{j=1}^p(k_j!l_j!)^{-1}\\
&{}\hspace*{30mm} \cdot E\Bigl(H_{k_1}(X_1^{(1)})\cdots H_{k_p}(X_1^{(p)})H_{l_1}(X_1^{(1)})\cdots H_{l_p}(X_1^{(p)})\Bigr)\\
=&\sum_{k_1+\ldots+k_p\geq m+1}J_{k_1,\ldots,k_p}^2\prod_{j=1}^p(k_j!)^{-2}
E\left(H_{k_1}(X_1^{(1)})\right)^2\cdots E\left(H_{k_p}(X_1^{(p)})\right)^2\\
\leq&\sum_{k_1,\ldots,k_p=0}J_{k_1,\ldots,k_p}^2\prod_{j=1}^p(k_j!)^{-1}\\
=&E\left(1_{\{G(X_1)\in A\}}-P(Y_1\in A)\right)^2\\
\leq&P(Y_1\in A)
\end{align*}
and
\begin{align*}
&\psi(bk)\\
\leq&\sum_{i,j=1}^p\left|r^{(i,j)}(bk)\right|\\
=&\sum_{i,j=1}^p\left|c_{(i,j)}L(bk)(bk)^{-D}\right|\\
\leq& C\left|L'(k)k^{-D}\right|,
\end{align*} 
where $L'(k):=L(bk)$ is slowly varying at infinity, we get
\begin{equation}\label{momentesumme}
E\left|\sum_{j=1}^nf(X_j)\right|^2\leq CP(Y_1\in A)\sum_{k,l=1}^{n}\left|L'(k-l)(k-l)^{-D}\right|^{m+1}.
\end{equation}

The remaining parts of the proof can be found in \citet[p. 1777]{Dehling}
\end{proof}
\begin{Lemma}\label{asymptotischevar}
For all $m\in\NN$ there exists a constant $C>0$ such that for all $l_1,\ldots,l_p\in\NN$, $l_1+\ldots+l_p=m$, and $n\in\NN$
\begin{equation*}
E\left|\sum_{j=1}^nH_{l_1,\ldots,l_p}(X_j)\right|^2\leq Cd_n^2
\end{equation*}
\end{Lemma}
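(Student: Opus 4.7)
My strategy is to expand the second moment of the sum into pairwise covariances of multivariate Hermite polynomials, control each covariance via the decorrelation bound of Lemma \ref{LemmaArcones}, and then compare the resulting double sum with the known asymptotic order of $d_n^2$.

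The essential observation is that $H_{l_1,\ldots,l_p}(\cdot)=H_{l_1}(\cdot^{(1)})\cdots H_{l_p}(\cdot^{(p)})$ is a single multivariate Hermite polynomial of total degree $m=l_1+\ldots+l_p$, so by orthogonality its only nonzero Hermite coefficient sits at the multi-index $(l_1,\ldots,l_p)$, and its Hermite rank with respect to the standard Gaussian on $\RR^p$ is exactly $m$. This is the hypothesis required for Lemma \ref{LemmaArcones}. Since the $r^{(i,j)}(k)$ decay to zero as $k\to\infty$, I can fix $b\in\NN$ with $\psi(bk)\leq 1$ for all $k\geq 1$, where $\psi(\cdot)$ is as defined in the proof of Lemma \ref{momente2}, and split the sum into the $b$ subsequences $(X_{(k-1)b+i})_k$, $1\leq i\leq b$. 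Applying Lemma \ref{LemmaArcones} within each subsequence yields
\begin{equation*}
\bigl|E\bigl[H_{l_1,\ldots,l_p}(X_{(k-1)b+i})\,H_{l_1,\ldots,l_p}(X_{(l-1)b+i})\bigr]\bigr|\leq \psi(b|k-l|)^{m}\cdot EH_{l_1,\ldots,l_p}(X_1)^2,
\end{equation*}
with $EH_{l_1,\ldots,l_p}(X_1)^2=\prod_j l_j!$ a finite constant depending only on $l_1,\ldots,l_p$.

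Now \eqref{a3} gives $\psi(bk)\leq C|L'(k)|k^{-D}$ where $L'(k):=L(bk)$ is still slowly varying, so summing the pairwise bound over the $b$ subsequences produces, under the hypothesis $0<mD<1$,
\begin{equation*}
E\Bigl|\sum_{j=1}^n H_{l_1,\ldots,l_p}(X_j)\Bigr|^2\leq C\sum_{k,l=1}^{n}\bigl(1\vee|L'(k-l)(k-l)^{-D}|\bigr)^{m}\leq C'n^{2-mD}L(n)^m
\end{equation*}
by the standard asymptotic for sums of regularly varying functions in the LRD regime. By \citet[Corollary~4.1]{Taqqu1975} the variance $d_n^2$ has precisely the same order, so the desired inequality $E|\sum_{j=1}^n H_{l_1,\ldots,l_p}(X_j)|^2\leq Cd_n^2$ follows.

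I expect the main obstacle to be purely bookkeeping: one must pin down the Hermite rank of the product polynomial $H_{l_1,\ldots,l_p}$ so that Lemma \ref{LemmaArcones} applies uniformly in $(l_1,\ldots,l_p)$ with $l_1+\ldots+l_p=m$, and one must cleanly adapt the block decomposition of Lemma \ref{momente2}. Once both are in place, no new analytic input is needed beyond standard slowly varying estimates and Taqqu's asymptotics for $d_n^2$.
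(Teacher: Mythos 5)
Your argument is essentially the paper's own proof: the paper likewise applies the block decomposition and Arcones' inequality (in the form of the bound \eqref{momentesumme} specialized to $f=H_{l_1,\ldots,l_p}$, which has Hermite rank $m$), bounds the resulting double sum by $Cn^{2-mD}L(bn)^m$ using $mD<1$ and slow variation, and compares with $d_n^2$. The proposal is correct and requires no further comment.
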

\begin{proof}
By using \eqref{momentesumme} with $f=H_{l_1,\ldots,l_p}$ we get
\begin{align*}
&E\left|\sum_{j=1}^nH_{l_1,\ldots,l_p}(X_j)\right|^2\\
\leq&CEH_{l_1,\ldots,l_p}(X_1)^2\sum_{k,l=1}^{n}\left|L(b(k-l))^m(k-l)^{-mD}\right|\\
\leq&Cn\sum_{k=1}^{n}\left|L(bk)^mk^{-mD}\right|\\
\leq&Cn^{2-mD}L(bn)^m\\
\leq&Cd_n^2.
\end{align*}
\end{proof}
\begin{Lemma}\label{Dehling3.2}
 There exist constants $\rho, C>0$ such that for all $n\leq N$ and $0<\varepsilon\leq1$
\begin{equation*}
 P\left(\sup\limits_{x\in\RR^p}\left|S_N(n,x)\right|>\varepsilon\right)\leq CN^{-\rho}\left(\left(\frac{n}{N}\right)\varepsilon^{-3}
 +\left(\frac{n}{N}\right)^{2-mD}\right).
\end{equation*}
\end{Lemma}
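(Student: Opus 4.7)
The plan is to mirror the proof of the analogous univariate-subordination lemma in section \ref{Proof 1 und 2}, substituting the multi-index Hermite machinery developed in Lemmas \ref{Lemma 3 mv}, \ref{momente2}, and \ref{asymptotischevar}. Starting from \eqref{darstellung2},
\[
S_N(n,x) = \sum_{l=1}^{(K+1)^p - 1} S_N(n, A_l(x)) + S_N(n, a_x(K), x),
\]
I would bound the supremum in $x$ of each piece separately.

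For the residual $S_N(n, a_x(K), x)$, the earlier argument transfers line-by-line after sandwiching $1_{\{Y_j \le x\}} - 1_{\{Y_j \le a_x(K)\}}$ between $0$ and $1_{\{Y_j < b_x(K)\}} - 1_{\{Y_j \le a_x(K)\}}$, and applying Lemma \ref{Lemma 3 mv} to dominate both the $F$-increment and the full multi-index sum $J_m(a_x(K), b_x(K)-)$ by $\Lambda(a_x(K), b_x(K)-) \le p\Lambda_1(\infty)\, 2^{-K}$. The only new feature is that the deterministic Hermite remainder becomes the multi-index sum $T_N(n) := \sum_{l_1+\ldots+l_p=m} \bigl|\sum_{j=1}^n H_{l_1,\ldots,l_p}(X_j)\bigr|$, comprising $\binom{m+p-1}{m}$ many terms instead of one.

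For the partition contributions, I would invoke Markov's inequality, expand over the disjoint cells of each $\mathcal{A}_l$, apply Lemma \ref{momente2}, and use $\sum_i P(Y_1 \in A_l(i)) = 1$ to reproduce the $C(n/N) N^{-\gamma}(l+4)^4 \varepsilon^{-2}$ estimate from \eqref{3.2}; summing over $l$ produces a $K^{5p}$ factor. For $T_N(n)$, the same choice $K = \lceil \log_2(8 p \Lambda_1(\infty) N d_N^{-1}/\varepsilon)\rceil$ absorbs the deterministic drift into $\varepsilon/4$, and Markov's inequality combined with Lemma \ref{asymptotischevar} applied termwise yields the analog of \eqref{3.4} with the same $(n/N)^{2-mD} N^{-mD + \lambda}$ order. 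Combining the three bounds and using $K^{5p} \le C\varepsilon^{-1} N^\delta$ for arbitrary $\delta > 0$ produces the claim with $\rho = (\gamma - \delta) \wedge (mD - \lambda)$ for suitably small $\delta, \lambda > 0$.

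The only real obstacle is verifying that replacing $\sum_j H_m(X_j)$ by $T_N(n)$ preserves the $L^2$-order $d_n$. This is precisely what Lemma \ref{asymptotischevar} guarantees: every multi-index summand has $L^2$-norm bounded by a constant multiple of $d_n$, and since the number of such summands is the $m,p$-dependent constant $\binom{m+p-1}{m}$, Minkowski's inequality closes the estimate without changing the scale.
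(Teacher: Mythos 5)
Your proposal is correct and follows essentially the same route as the paper's own proof: the decomposition \eqref{darstellung2}, the sandwich bound with Lemma \ref{Lemma 3 mv}, the cell-wise application of Lemma \ref{momente2}, the same choice of $K$, and a termwise use of Lemma \ref{asymptotischevar} for the finitely many ($M=\binom{m+p-1}{m}$) multi-index Hermite sums, concluding with $K^{5p}\leq C\varepsilon^{-1}N^{\delta}$. The only cosmetic difference is that you control the sum $T_N(n)$ via Minkowski's inequality where the paper uses a union bound over the $M$ multi-indices before applying Chebyshev, which changes nothing in the resulting rate.
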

\begin{proof}
 Since we want to use representation \eqref{darstellung2}, we will start by bounding $|S_N(n,a_{x}(K),x)|$.
\begin{align*}
 &|S_N(n,a_{x}(K),x)|\\
=&\Biggl|d_N^{-1}\sum_{j=1}^n\Bigl(\bigl(1_{\{Y_j\leq x\}}-1_{\{Y_j\leq
a_{x}(K)\}}\bigr)-F(a_{x}(K),x)-\sum_{l_1+\ldots+l_p=m}\frac{J_{l_1,\ldots,l_p}(a_{x}(K),x)}{l_1!\cdots l_p!}H_{l_1,\ldots,l_p}(X_j)\Bigr)\Biggr|\\
\leq&d_N^{-1}\sum_{j=1}^n\Bigl(\bigl(1_{\{Y_j< b_{x}(K)\}}-1_{\{Y_j\leq a_{x}(K)\}}\bigr)+F(a_{x}(K),b_{x}(K)-)\Bigr)\\
&~~~~+\sum_{l_1+\ldots+l_p=m}\frac{J_{l_1,\ldots,l_p}(a_{x}(K),b_{x}(K)-)}{l_1!\cdots l_p!}d_N^{-1}\Bigl|\sum_{j=1}^nH_{l_1,\ldots,l_p}(X_j)\Bigr|\\
\leq&|S_N(n,a_{x}(K),b_{x}(K)-)|+2nd_N^{-1}F(a_{x}(K),b_{x}(K)-)\\
&~~~~+2d_N^{-1}\sum_{l_1+\ldots+l_p=m}\frac{J_{l_1,\ldots,l_p}(a_{x}(K),b_{x}(K)-)}{l_1!\cdots l_p!}\Bigl|\sum_{j=1}^nH_{l_1,\ldots,l_p}(X_j)\Bigr|\\
\leq&|S_N(n,a_{x}(K),b_{x}(K)-)|+2nd_N^{-1}\Lambda(a_{x}(K),b_{x}(K)-)\\
&~~~~+2d_N^{-1}\Lambda(a_{x}(K),b_{x}(K)-)\sum_{l_1+\ldots+l_p=m}\Bigl|\sum_{j=1}^nH_{l_1,\ldots,l_p}(X_j)\Bigr|\\
\leq&|S_N(n,a_{x}(K),b_{x}(K)-)|+2nd_N^{-1}p\Lambda_1(\infty)2^{-K}\\
&~~~~+2d_N^{-1}p\Lambda_1(\infty)2^{-K}\sum_{l_1+\ldots+l_p=m}\Bigl|\sum_{j=1}^nH_{l_1,\ldots,l_p}(X_j)\Bigr|\numberthis\label{eps/2_2}
\end{align*}
By using \eqref{darstellung2}, \eqref{eps/2_2} and $\sum_{l=1}^{\infty}\varepsilon/(l+4)^2<\varepsilon/4$ we get
\begin{align}
 &P\left(\sup\limits_{x\in\RR^p}\left|S_N(n,x)\right|>\varepsilon\right)\notag\\
\leq\sum_{l=1}^{(K+1)^p-1}&P\left(\max\limits_{x\in\RR^p}\left|S_N(n,A_l(x))\right|>\varepsilon/(l+4)^2\right)\notag\\
+&P\left(\sup\limits_{x\in\RR^p}\left|S_N(n,a_{x}(K),b_{x}(K)-)\right|>\varepsilon/4\right)\notag\\
+&P\biggl(2d_N^{-1}p\Lambda_1(\infty)2^{-K}\sum_{l_1+\ldots+l_p=m}\Bigl|\sum_{j=1}^nH_{l_1,\ldots,l_p}(X_j)\Bigr|>\varepsilon/2-2nd_N^{-1}p\Lambda_1(\infty)2^{-K}\biggr).\label{3.1_2}
\end{align}
Remember, the elements $A_l(1),\ldots A_l(|\mathcal{A}_l|)$ of $\mathcal{A}_l$ are disjoint. Therefore Lemma \ref{momente2} yields
\begin{align*}
 &P\left(\max\limits_{x\in\RR^p}\left|S_N(n,A_l(x))\right|>\varepsilon/(l+4)^2\right)\\
\leq&\sum_{i=1}^{|\mathcal{A}_l|}P\left(\left|S_N(n,A_l(i))\right|>\varepsilon/(l+4)^2\right)\\
\leq&\sum_{i=1}^{|\mathcal{A}_l|}(l+4)^4\varepsilon^{-2}E\left|S_N(n,A_l(i))\right|^2\\
\leq&C\left(\frac{n}{N}\right)N^{-\gamma}(l+4)^4\varepsilon^{-2}\sum_{i=1}^{|\mathcal{A}_l|}P(Y_1\in A_l(i))\\
=&C\left(\frac{n}{N}\right)N^{-\gamma}(l+4)^4\varepsilon^{-2},\numberthis\label{3.2_2}
\end{align*}
for $1\leq l\leq (K+1)^p-1$. The next-to-last summand in \eqref{3.1_2} can be bounded as follows. Similar to \eqref{ver} and \eqref{darstellung2} each partition of quality $K$ contains one element $B_l(x)$ such that 
\begin{equation*}
S_N(n,a_{x}(K),b_{x}(K)-)=\sum_{l=1}^{(K+1)^p-K^p} S_N(n,B_l(x)-).
\end{equation*}
With respect to \eqref{3.2_2} we get
\begin{align*}
 &P\left(\sup\limits_{x\in\RR^p}\left|S_N(n,a_{x}(K),b_{x}(K)-)\right|>\varepsilon/4\right)\\
\leq&\sum_{l=1}^{(K+1)^p-K^p}P\left(\max\limits_{x\in\RR^p}\left|S_N(n,B_l(x)-)\right|>\varepsilon/(4(l+4)^2)\right)\\
\leq&C\left(\frac{n}{N}\right)N^{-\gamma}\varepsilon^{-2}\sum_{l=1}^{(K+1)^p-K^p}(l+4)^4.\numberthis\label{3.3_2}
\end{align*}
Now let
\begin{align*}
M=&\left|\left\{(l_1,\ldots,l_p)\in\NN^p:l_1+\ldots l_p=m\right\}\right|\\
\intertext{and}
 K=&\left\lceil\log_2\left(\frac{8p\Lambda_1(\infty)}{\varepsilon}Nd_N^{-1}\right)\right\rceil.
\end{align*}
This choice implies
\begin{align*}
 &2Nd_N^{-1}p\Lambda_1(\infty)2^{-K}\leq\frac{\varepsilon}{4}\\
 &\left(\frac{\varepsilon}{4}\right)^{-2}\leq N^{-2}d_N^2(p\Lambda_1(\infty))^{-2}2^{2K-2}
\end{align*}
and together with Lemma \ref{asymptotischevar} we obtain
\begin{align}
 &P\biggl(2d_N^{-1}p\Lambda_1(\infty)2^{-K}\sum_{l_1+\ldots+l_p=m}\Bigl|\sum_{j=1}^nH_{l_1,\ldots,l_p}(X_j)\Bigr|>\varepsilon/2-2nd_N^{-1}p\Lambda_1(\infty)2^{-K}\biggr)\notag\\
\leq&P\biggl(2d_N^{-1}p\Lambda_1(\infty)2^{-K}\sum_{l_1+\ldots+l_p=m}\Bigl|\sum_{j=1}^nH_{l_1,\ldots,l_p}(X_j)\Bigr|>\varepsilon/4\biggr)\notag\\
\leq&P\biggl(d_N^{-1}\sum_{l_1+\ldots+l_p=m}\Bigl|\sum_{j=1}^nH_{l_1,\ldots,l_p}(X_j)\Bigr|>\frac{\varepsilon}{4}\cdot\frac{2^{K-1}}{p\Lambda_1(\infty)}\biggr)\notag\\
\leq&\sum_{l_1+\ldots+l_p=m}P\biggl(d_N^{-1}\Bigl|\sum_{j=1}^nH_{l_1,\ldots,l_p}(X_j)\Bigr|>\frac{\varepsilon}{4}\cdot\frac{2^{K-1}}{p\Lambda_1(\infty)M}\biggr)\notag\\
\leq&d_N^{-2}\sum_{l_1+\ldots+l_p=m}E\Bigl|\sum_{j=1}^nH_{l_1,\ldots,l_p}(X_j)\Bigr|^2\left(\frac{\varepsilon}{4}\right)^{-2}2^{-2K+2}(p\Lambda_1(\infty))^2M^2\notag\\
\leq&C\left(\frac{d_n}{d_N}\right)^2N^{-2}d_N^2\notag\\
\leq&C\left(\frac{n}{N}\right)^{2-mD}\left(\frac{L(n)}{L(N)}\right)^mN^{-mD}L^m(N)\notag\\
\leq&C\left(\frac{n}{N}\right)^{2-mD}N^{-mD+\lambda},\label{3.4_2}
\end{align}
for any $\lambda>0$. By using \eqref{3.1_2}, \eqref{3.2_2}, \eqref{3.3_2}, \eqref{3.4_2} we get 
\begin{align*}
 &P\left(\sup\limits_{x\in\RR^p}\left|S_N(n,x)\right|>\varepsilon\right)\\
\leq&C\left(\frac{n}{N}\right)N^{-\gamma}\varepsilon^{-2}\left(\sum_{l=1}^{(K+1)^p-1}(l+4)^4+\sum_{l=1}^{(K+1)^p-K^p}(l+4)^4\right)
 +C\left(\frac{n}{N}\right)^{2-mD}N^{-mD+\lambda}\\
\leq&C\left(\frac{n}{N}\right)N^{-\gamma}\varepsilon^{-2}K^{5p}+C\left(\frac{n}{N}\right)^{2-mD}N^{-mD+\lambda}\numberthis\label{vorletzte2}
\end{align*}
Since 
\begin{align*}
K^{5p}\leq&C\left(\log(\varepsilon^{-1})^{5p}+\log(N)^{5p}\right)\\
           \leq&C\varepsilon^{-1}N^{\delta}
\end{align*}
for any $\delta>0$, \eqref{vorletzte2} is bounded by
 \begin{equation*}
 CN^{(-\gamma+\delta)\vee(-mD+\lambda)}\left(\left(\frac{n}{N}\right)\varepsilon^{-3} +\left(\frac{n}{N}\right)^{2-mD}\right),
 \end{equation*}
 which completes the proof.
\end{proof}
\begin{proof}[Proof of \thref{reduction mv}]
Lemma \ref{Dehling3.2} corresponds to Lemma 3.2 by \citet{Dehling}. Therefore the proof of Theorem \ref{reduction mv} is the same as in the one-dimensional case, see \citep[p.1781]{Dehling}.
\end{proof}
Instead of studying the partial sum process of multivariate Hermite polynomials we will deduce the asymptotics of $(R_N(x,t))$ from a linear combination of $m$th order univariate Hermite polynomials. Therefor we use the following Lemma.
\begin{Lemma}\label{LemmaRunge}
For all $m\in\NN$ and $a_1,\ldots,a_p\in\RR$ with $a_1^2+\ldots+a_p^2=1$ we have
\begin{equation}\label{Runge}
H_m\left(\sum_{j=1}^pa_jx_j\right)=\sum_{m_1+\ldots+m_p=m}\frac{m!}{m_1!\cdots m_p!}\prod_{j=1}^pa_j^{m_j}H_{m_j}(x_j).
\end{equation}
\end{Lemma}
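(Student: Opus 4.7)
The plan is to prove the identity via the exponential generating function for the Hermite polynomials. In the probabilist's normalization used in the paper, one has the standard identity
\begin{equation*}
\sum_{n=0}^{\infty}\frac{t^n}{n!}H_n(y)=\exp\!\left(ty-\tfrac{t^2}{2}\right),
\end{equation*}
which is a direct consequence of the defining formula $H_n(y)=(-1)^ne^{y^2/2}\frac{d^n}{dy^n}e^{-y^2/2}$ applied to the Taylor expansion of $y\mapsto e^{ty-t^2/2}$.

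Substituting $y=\sum_{j=1}^p a_j x_j$ and exploiting the hypothesis $a_1^2+\ldots+a_p^2=1$ to rewrite $t^2/2 = \sum_{j=1}^p t^2 a_j^2/2$, the generating function factorizes as
\begin{equation*}
\exp\!\left(t\sum_{j=1}^p a_j x_j-\tfrac{t^2}{2}\right)=\prod_{j=1}^p\exp\!\left((ta_j)x_j-\tfrac{(ta_j)^2}{2}\right)=\prod_{j=1}^p\sum_{m_j=0}^{\infty}\frac{(ta_j)^{m_j}}{m_j!}H_{m_j}(x_j).
\end{equation*}
Expanding the product and grouping terms of equal total degree in $t$, the right-hand side becomes
\begin{equation*}
\sum_{n=0}^{\infty}t^n\sum_{m_1+\ldots+m_p=n}\frac{1}{m_1!\cdots m_p!}\prod_{j=1}^p a_j^{m_j}H_{m_j}(x_j).
\end{equation*}

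Comparing the coefficient of $t^n$ in this series with the coefficient $H_n(\sum_{j}a_j x_j)/n!$ coming from the generating function on the left yields the claimed identity \eqref{Runge}. Since both expressions are polynomials in $t$ (for each fixed $x_1,\ldots,x_p$) whose formal power series coincide, the comparison is rigorous and no analytic subtleties arise. The only condition actually used is the normalization $\sum_j a_j^2=1$, which is precisely what allows the quadratic term in the exponent to distribute across the factors; without it one would get an extra exponential factor spoiling the expansion. There is no real obstacle here — the argument is a one-line generating-function calculation once the right normalization of $H_n$ is recognized.
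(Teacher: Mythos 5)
Your proof is correct, and it takes a genuinely different route from the one in the paper. You argue via the exponential generating function $\sum_{n\geq0}\frac{t^n}{n!}H_n(y)=\exp(ty-t^2/2)$, factor the exponential using $\sum_j a_j^2=1$, and compare coefficients of $t^n$; this is a clean, essentially one-step argument once the generating function is in hand (which indeed follows directly from the paper's definition $H_n(y)=(-1)^ne^{y^2/2}\frac{d^n}{dy^n}e^{-y^2/2}$, since $e^{ty-t^2/2}=e^{y^2/2}e^{-(y-t)^2/2}$). The paper instead proceeds more elementarily: it shows by induction, using $H_n'(x)=nH_{n-1}(x)$, that both sides of \eqref{Runge} have identical partial derivatives, so they agree up to an additive constant, and then fixes the constant by evaluating at $x_1=\ldots=x_p=0$ via $H_m(0)=(-1)^{m/2}(m-1)!!$ and a multinomial-theorem computation with $(\sum_j a_j^2)^{m/2}=1$. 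Your approach buys brevity and makes the role of the normalization $\sum_j a_j^2=1$ completely transparent (it is exactly what lets the quadratic term distribute over the factors), at the price of invoking the generating function and a rearrangement of the product of series; the latter is unproblematic since for fixed $x_1,\ldots,x_p$ all series involved are absolutely convergent in $t$ (note, though, that they are power series, not polynomials in $t$ as you wrote — uniqueness of power-series coefficients, or a purely formal-power-series reading, is what justifies the comparison). The paper's derivative-plus-evaluation argument is longer but uses only the recurrence and the explicit value at zero, staying entirely inside polynomial identities.
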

Since we could not find a proof for this well known result in literature we give one here.
\begin{proof}
We first show that all partial derivatives are equal by using induction. For $m=1$ this is obvious. Remember that $H_n'(x)=nH_{n-1}(x)$. Therefore we get
\begin{align*}
&\frac{\partial}{\partial x_1}
\left(\sum_{m_1+\ldots+m_p=m+1}\frac{(m+1)!}{m_1!\cdots m_p!}\prod_{j=1}^pa_j^{m_j}H_{m_j}(x_j)\right)\\
=&\sum_{m_1+\ldots+m_p=m+1}\frac{(m+1)!}{(m_1-1)!\cdots m_p!}a_1^{m_1}H_{m_1-1}(x_1)\prod_{j=2}^pa_j^{m_j}H_{m_j}(x_j)\\
=&a_1(m+1)\sum_{m_1+\ldots+m_p=m}\frac{(m)!}{m_1!\cdots m_p!}\prod_{j=1}^pa_j^{m_j}H_{m_j}(x_j)\\
=&a_1(m+1)H_m\left(\sum_{j=1}^pa_jx_j\right)\\
=&\frac{\partial}{\partial x_1}H_{m+1}\left(\sum_{j=1}^pa_jx_j\right)
\end{align*}
The other derivatives can be handled similarly. Therefore \eqref{Runge} holds up to a constant. Let $x_1=\ldots=x_p=0$. If $m$ is odd both sides of \eqref{Runge} are equal to zero and thus the constant vanishes. For even $m$ we have $H_m(0)=(-1)^{m/2}(m-1)!!$, where
\begin{equation*}
(m-1)!!:=(m-1)(m-3)\cdots3\cdot1=\frac{m!}{2^{m/2}(m/2)!}.
\end{equation*}
This yields
\begin{align*}
&\sum_{m_1+\ldots+m_p=m}\frac{m!}{m_1!\cdots m_p!}\prod_{j=1}^pa_j^{m_j}H_{m_j}(0)\\
=&\sum_{2m_1+\ldots+2m_p=m}\frac{m!}{(2m_1)!\cdots (2m_p)!}\prod_{j=1}^p(-1)^{m_j}a_j^{2m_j}(2m_j-1)!!\\
=&(-1)^{m/2}\sum_{2m_1+\ldots+2m_p=m}\frac{m!}{(2m_1)!!\cdots (2m_p)!!}\prod_{j=1}^p(a_j^2)^{m_j}\\
=&(-1)^{m/2}\sum_{m_1+\ldots+m_p=m/2}\frac{m!}{2^{m/2}m_1!\cdots m_p!}\prod_{j=1}^p(a_j^2)^{m_j}\\
=&(-1)^{m/2}\sum_{m_1+\ldots+m_p=m/2}\frac{(m-1)!!2^{m/2}(m/2)!}{2^{m/2}m_1!\cdots m_p!}\prod_{j=1}^p(a_j^2)^{m_j}\\
=&(-1)^{m/2}(m-1)!!\sum_{m_1+\ldots+m_p=m/2}\frac{(m/2)!}{m_1!\cdots m_p!}\prod_{j=1}^p(a_j^2)^{m_j}\\
=&H_m(0)\left(\sum_{j=1}^p a_j^2\right)^{m/2}\\
=&H_m(0).
\end{align*}
\end{proof}
\begin{proof}[Proof of \thref{non central mv}]
By \thref{reduction mv} it is enough to study the limit of
\begin{equation*}
\left\{d_N^{-1}\sum_{j=1}^{\lfloor Nt\rfloor}\sum_{l_1+\ldots+l_p=m}
\frac{J_{l_1,\ldots,l_p}(x)}{l_1!\cdots l_p!}H_{l_1,\ldots,l_p}(X_j):(x,t)\in[-\infty,\infty]^p\times[0,1]\right\}.
\end{equation*}
We first show that in the current situation Lemma \ref{LemmaRunge} can be applied.
For all $k_1,\ldots,k_p$ satisfying $k_1+\ldots+k_p=m$ we can find real numbers $a_{k_1,\ldots,k_p}^{(1)},\ldots a_{k_1,\ldots,k_p}^{(p)}$, s.t. the matrix
\begin{equation*}
A=\left(\prod_{i=1}^p(a_{k_1,\ldots,k_p}^{(i)})^{m_i}\right)_{\substack{m_1+\ldots+m_p=m\\k_1+\ldots+k_p=m}}
\end{equation*}
is invertible. After normalization we have $\sum_{i=1}^p (a_{k_1,\ldots,k_p}^{(i)})^2=1$. For a suitable diagonalmatrix $M$ of the same size define $B:=MA^{-1}$, $B=(b(k_1,\ldots,k_p,l_1,\ldots,l_p))$, s.t.
\begin{align*}
&\sum_{k_1+\ldots+k_p=m}b(k_1,\ldots,k_p,l_1,\ldots,l_p)(a_{k_1,\ldots,k_p}^{(1)})^{m_1}\cdots
(a_{k_1,\ldots,k_p}^{(p)})^{m_p}\\
&=\begin{cases}(m!)^{-1}\prod_{i=1}^p l_i!~~~~&\textnormal{ if }(m_1,\ldots,m_p)=(l_1,\ldots,l_p)\\0
&\textnormal{ otherwise.}\end{cases}\numberthis\label{VorbereitungRunge}
\end{align*}
By using Lemma \ref{LemmaRunge} together with \eqref{VorbereitungRunge} we get
\begin{align*}
&\sum_{\substack{l_1+\ldots+l_p=m\\ k_1+\ldots+k_p=m}}J_{l_1,\ldots,l_p}(x)\left(\prod_{i=1}^p(l_i!)^{-1}\right)
b(k_1,\ldots,k_p,l_1,\ldots,l_p)H_m\left(\sum_{i=1}^p a_{k_1,\ldots,k_p}^{(i)}X_j^{(i)}\right)\\
=&\sum_{\substack{l_1+\ldots+l_p=m\\ k_1+\ldots+k_p=m}}\sum_{m_1+\ldots+m_p=m}
J_{l_1,\ldots,l_p}(x)\left(\prod_{i=1}^p(l_i!)^{-1}\right)b(k_1,\ldots,k_p,l_1,\ldots,l_p)\\
&~~~~~~~~~~~~~~~~\times m!\prod_{i=1}^p(m_i!)^{-1}\left(a_{k_1,\ldots,k_p}^{(i)}\right)^{m_i}H_{m_i}\left(X_j^{(i)}\right)\\
=&\sum_{l_1+\ldots+l_p=m}J_{l_1,\ldots,l_p}(x)\prod_{i=1}^p(l_i!)^{-1}H_{l_i}\left(X_j^{(i)}\right)
\end{align*}
For simplicity define
\begin{equation*}
I(x;k_1,\ldots,k_p):=\sum_{l_1+\ldots+l_p=m}J_{l_1,\ldots,l_p}(x)\left(\prod_{i=1}^p(l_i!)^{-1}\right)
b(k_1,\ldots,k_p,l_1,\ldots,l_p)
\end{equation*}
so that we get the identity
\begin{align*}
&d_N^{-1}\sum_{j=1}^{\lfloor Nt\rfloor}\sum_{l_1+\ldots+l_p=m}\prod_{i=1}^p(l_i!)^{-1}H_{l_i}\left(X_j^{(i)}\right)\\
=&d_N^{-1}\sum_{j=1}^{\lfloor Nt\rfloor}\sum_{k_1+\ldots+k_p=m}I(x;k_1,\ldots,k_p)H_m\left(\sum_{i=1}^p 
a_{k_1,\ldots,k_p}^{(i)}X_j^{(i)}\right)
\end{align*}
Note that $Y_j^{(k_1,\ldots,k_p)}:=\sum_{i=1}^p a_{k_1,\ldots,k_p}^{(i)}X_j^{(i)}$ is standard normal distributed and that 
\begin{align*}
&\int_{\RR^m}^{''}\frac{e^{it(x_1+\ldots+x_m)}-1}{i(x_1+\ldots+x_m)}\prod_{j=1}^m|x_j|^{-(1-D)/2}
\left(\sum_{i=1}^p a_{k_1,\ldots,k_p}^{(i)}B^{(i)}\right)(dx_1)\cdots 
\left(\sum_{i=1}^p a_{k_1,\ldots,k_p}^{(i)}B^{(i)}\right)(dx_m)\\
=&\sum_{j_1,\ldots,j_m=1}^p a_{k_1,\ldots,k_p}^{(j_1)}\cdots a_{k_1,\ldots,k_p}^{(j_m)}
\int_{\RR^m}^{''}\frac{e^{it(x_1+\ldots+x_m)}-1}{i(x_1+\ldots+x_m)}\prod_{j=1}^m|x_j|^{-(1-D)/2}
B^{(j_1)}(dx_1)\cdots B^{(j_m)}(dx_m)\\
:=&\sum_{j_1,\ldots,j_m=1}^p a_{k_1,\ldots,k_p}^{(j_1)}\cdots a_{k_1,\ldots,k_p}^{(j_m)}
Z_{j_1,\ldots,j_m}(t).
\end{align*}
Therefore, as in \eqref{Hermite gemeinsam} we have
\begin{gather*}
\left\{d_N^{-1}\sum_{j=1}^{\lfloor Nt\rfloor}H_m(Y_j^{(k_1,\ldots,k_p)}):k_1,\ldots,k_p=m, t\in[0,1]\right\}\xrightarrow{~~d~~}\\
\left\{\sum_{j_1,\ldots,j_m=1}^p a_{k_1,\ldots,k_p}^{(j_1)}\cdots a_{k_1,\ldots,k_p}^{(j_m)}
Z_{j_1,\ldots,j_m}(t):k_1,\ldots,k_p=m, t\in[0,1]\right\}.
\end{gather*}
By Dudley and Wichura's almost sure representation theorem we can find vector processes $(\tilde{S}_N(t))$ and $(\tilde{Z}(t))$, which have the same distribution as the above,  s.t. $(\tilde{S}_N(t))$ converges a.s. to $(\tilde{Z}(t))$ in $D[0,1]$. Since the functions $I(x;k_1,\ldots,k_p)$ are bounded, these a.s. convergence still holds in  $D([-\infty,\infty]^p\times [0,1])$ if one multiplies $I(x;k_1,\ldots,k_p)$ to the corresponding component of $(\tilde{S}_N(t))$ resp. $(\tilde{Z}(t))$ in $D[0,1]$. Applying the continious mapping theorem we get
\begin{multline*}
\left\{d_N^{-1}\sum_{j=1}^{\lfloor Nt\rfloor}\sum_{k_1+\ldots+k_p=m}I(x;k_1,\ldots,k_p)H_m(Y_j^{(k_1,\ldots,k_p)}):
(x,t)\in[-\infty,\infty]^p\times [0,1]\right\}\xrightarrow{~~d~~}\\
\Biggl\{\sum_{j_1,\ldots,j_m=1}^p\sum_{k_1+\ldots+k_p=m}I(x;k_1,\ldots,k_p)
 a_{k_1,\ldots,k_p}^{(j_1)}\cdots a_{k_1,\ldots,k_p}^{(j_m)}Z_{j_1,\ldots,j_m}(t):\\
 (x,t)\in[-\infty,\infty]^p\times [0,1]\Biggr\}.
\end{multline*}
Finally we have to verify that this limit is equal to \eqref{Grenzprozess}. By \eqref{VorbereitungRunge} we obtain
\begin{align*}
&\sum_{j_1,\ldots,j_m=1}^p\sum_{k_1+\ldots+k_p=m}
I(x;k_1,\ldots,k_p) a_{k_1,\ldots,k_p}^{(j_1)}\cdots a_{k_1,\ldots,k_p}^{(j_m)}\\
=&\sum_{j_1,\ldots,j_m=1}^p\sum_{\substack{k_1+\ldots+k_p=m\\l_1+\ldots+l_p=m}}
J_{l_1,\ldots,l_p}(x)\left(\prod_{i=1}^p(l_i!)^{-1}\right)
b(k_1,\ldots,k_p,l_1,\ldots,l_p)a_{k_1,\ldots,k_p}^{(j_1)}\cdots a_{k_1,\ldots,k_p}^{(j_m)}\\
=&\begin{cases}(m!)^{-1}J_{l_1,\ldots,l_p}(x)~~~~&\textnormal{ if }l_i=i(j_1,\ldots,j_m)\\0
&\textnormal{ otherwise.}\end{cases}
\end{align*}
But if $l_i=i(j_1,\ldots,j_m)$ for $1\leq i\leq p$ we have
\begin{equation*}
\tilde{J}_{j_1,\ldots,j_m}=(m!)^{-1}J_{l_1,\ldots,l_p}(x),
\end{equation*}
which completes the proof.
\end{proof}
\newpage
\bibliography{literatur}
\end{document}